\newtheorem{thm}{Theorem}
\newtheorem{prop}{Proposition}
\newtheorem{cor}{Corollary}
\newtheorem{lem}{Lemma}
\newtheorem*{thmf}{Th\'eor\`eme}
\newtheorem*{lemf}{Lemme}
\theoremstyle{remark}
\newtheorem{rem}{Remark}
\newtheorem{expl}{Example}
\newtheorem*{remf}{Remarque}
\theoremstyle{definition}
\newtheorem{defn}{Definition}
\def\reg{\hbox{\rm reg}}
\def\ann{\hbox{\rm ann}}
\def\indeg{\hbox{\rm indeg}}
\def\proj{\hbox{\rm Proj}}
\def\rees{\hbox{\rm Rees}}
\def\sym{\hbox{\rm Sym}}
\def\div{\hbox{\rm Div}}
\def\syz{\hbox{\rm Syz}}
\def\tor{\hbox{\rm Tor}}
\def\fin{\hbox{\rm end}}
\def\homgr{\hbox{\rm Homgr}}
\def\B{{\mathcal{B}}}
\def\Z{{\mathcal{Z}}}
\def\homgr{\hbox{\rm Homgr}}
\def\om{{\omega}}
\def\b{{\beta}}
\def\a{{\alpha}}
\def\im{{\mathfrak{m}}}
\def\ip{{\mathfrak{p}}}
\def\cs{{\mathcal{S}}}
\def\ch{{\mathcal{H}}}
\def\ks{{\mathfrak{KS}}}
\def\ra{{\rightarrow}}
\def\ol{\overline}
\def\lra{{\longrightarrow}}
\newcommand{\surj}{\twoheadrightarrow}
\newcommand{\lr}[1]{\langle #1 \rangle}
\title{Elimination and nonlinear equations of Rees algebra}
\author[Laurent Bus\'e]{Laurent Bus\'e}
\address{INRIA Sophia Antipolis - M\'editerran\'ee.
            2004 route des Lucioles, B.P. 93,
            F-06902 Sophia Antipolis}
\email{Laurent.Buse@inria.fr}
\urladdr{\tt http://www-sop.inria.fr/members/Laurent.Buse/}
\author[Marc Chardin]{Marc Chardin}
\address{Institut de Math\'ematiques de Jussieu.
UPMC, Boite 247, 4, place Jussieu,
F-75252 PARIS CEDEX 05}
\email{chardin@math.jussieu.fr}
\urladdr{http://people.math.jussieu.fr/~chardin/}
\author[Aron Simis]{Aron Simis}
\address{Universidade Federal de Pernambuco.
Departamento de Matem\'atica,
Av. Prof. Luiz Freire, s/n,
Cidade Universit\'aria, CEP 50740-540,
Recife - Pernambuco - Brasil.}
\email{aron@dmat.ufpe.br}
\date{\today}
\begin{document}

\maketitle
\centerline{with an appendix by Joseph Oesterl\'e}

\begin{abstract}
A new approach is established to computing the image of a rational map,
whereby the use of approximation complexes is complemented with a detailed analysis of the torsion of the
symmetric algebra in certain degrees. In the case the map is everywhere defined this analysis provides free
resolutions of graded parts of the Rees algebra of the base ideal in degrees where it does not coincide with the
corresponding symmetric algebra. A surprising fact is that the torsion in those degrees
only contributes to the first free module in the resolution of the symmetric
algebra modulo torsion. An additional point is that this contribution -- which of course corresponds
to non linear equations of the Rees algebra --  can be described in these degrees
in terms of non Koszul syzygies %({\it i. e.} linear relations modulo the evident ones)
via certain upgrading maps in the vein of the ones introduced earlier by
J. Herzog, the third named author and W. Vasconcelos.
As a measure of the reach of this torsion analysis we could say that, in the
case of a general everywhere defined map, half of the degrees where the torsion does not vanish
 are understood.

\end{abstract}
\section{Introduction}

Let $k$ stand for an arbitrary field, possibly assumed to be of  characteristic zero in some parts of
this work.
Let $R:=k[X_1,\ldots , X_n]\,(n\geq 2)$ denote a standard graded polynomial ring over the field $k$
and let $I\subset R$ denote an ideal generated by $k$-linearly independent forms $\mathbf{f}=\{f_0,\ldots ,f_n\}$
of the same degree $d\geq 1$.
Set $\im :=(X_1,\ldots , X_n)$.

Throughout the paper $I$ will be assumed to be of codimension at least $n-1$, i.e., that $\dim R/I\leq 1$.
In the terminology of rational maps, we are assuming that the base locus of the rational map
defined by $\mathbf{f}$ consists of a finite (possibly, empty) set of points.
Furthermore, for the purpose of elimination theory we will always assume that $\dim k[\mathbf{f}]=\dim R$, i.e.,
that the image of the rational map is a hypersurface.

The background for the contents revolves around the use of the so-called approximation complex ${\mathcal Z}$
(\cite{HeSV83}) associated to $I$ in order to extract free complexes over a polynomial ring that yield the equation
of the eliminated hypersurface, at least in principle.
This idea was originated in \cite{BuJo03} to which subsequent additions were made in \cite{BuCh05}
and \cite{BCJ}.

We note that the complex ${\mathcal Z}={\mathcal Z}(I)$ is in the present case an acyclic complex of bigraded modules
over the standard bigraded polynomial ring $S:=R[T_0,\ldots, T_n]$.
The gist of the idea has been to look at the the one-side $\mathbb{N}$-grading of $S$ given by
$S=\oplus_{\mu\geq 0}S_{\mu}$, where $S_{\mu}:= R{_\mu}\otimes_k k[T_0,\ldots, T_n]$ is naturally
a free $k[T_0,\ldots, T_n]$-module.
When ``restricted'' to this $\mathbb{N}$-grading, ${\mathcal Z}(I)$ gives a hold of the corresponding
graded pieces of the symmetric algebra $\sym_R (I)$ of $I$.
In order to set up the next stage % in which the resulting complex is actually acyclic
one has to assume
some {\em threshold degree} beyond which
the annihilator of the graded piece of the symmetric algebra stabilizes.
The final step is to read the eliminated equation off a matrix of the presentation map of such a graded piece.

One basic question is to express this sort of threshold degree in terms of the numerical invariants
stemming from the data, i.e., from $I$.
In \cite{BuCh05} one such invariant was introduced which involved solely the integers $n,d$ and the initial
degree of the $\im$-saturation of $I$.

In the present incursion into the question we take a slight diversion by bringing up the symmetric algebra of $I$
modulo its $\im$-torsion.
In a precise way, we shift the focus to the $R$-algebra $\cs_I^* :=\sym_R (I)/H^0_\im (\sym_R (I))$.
This algebra is an intermediate homomorphic image of $\sym_R (I)$ in the way to get the Rees algebra
$\rees_R (I)$ of $I$.
In fact, when $I$ is $\im$-primary -- so to say, half of the cases we have in mind -- one has
$\cs_I^*=\rees_R (I)$.

Correspondingly, we introduce yet another threshold degree $\mu_0(I)$ involving, besides the basic integers $n,d$,
also numerical data of the Koszul homology of $I$.
All results will of this paper will deal with integers (degrees) $\mu$ satisfying $\mu\geq \mu_0(I)$ --
any such integer will be named a {\em threshold integer}.
Moreover, a dimension theoretic restriction will be assumed, namely that $\dim \sym_R (I)=\dim \rees_R (I)$.
By \cite[Proposition 8.1]{HeSV83}, this is equivalent to requiring a typical bound on the
local number of minimal generators of $I$, to wit
$$\nu (I_\ip )\leq \dim R_\ip +1, \quad \mbox{\rm for every prime ideal}\quad  \ip \supset I.$$
In the present context, this is no requirement whatsoever if $I$ happens to be $\im$-primary, and in the
codimension $n-1$ case it is imposing that $I$ be generically generated by $n=\dim R$ elements -- i.e.,
a drop by one from the global number of equations.
The need for this assumption stems clear from mimicking an almost complete intersection
of codimension $n$.

Section 2 contains the main structural result related to the
threshold degree (Theorem~\ref{thm:Fl/Fl-1free}).
Firstly, we prove the
vanishing of the graded components,
beyond the threshold degree, of all local cohomology modules (of order $i\geq 1$) of $\sym_R (I)$ with
support on $\im$; and secondly, we prove the freeness as $k[T_0,\ldots, T_n]$-modules, of the graded components,
beyond the threshold degree, of $J\lr{\ell}/J\lr{\ell -1}$, for $\ell\geq 2$, alongside with the values
of their ranks.
Here, $J:=\ker (S \surj \cs_I^*)$ with $T_j\mapsto f_j$ and $J\lr{\ell}$ denotes its degree $\ell$ homogeneous
part in the standard grading of $R[T_0,\ldots,T_n]$.

As it turns the free $k[T_0,\ldots, T_n]$-modules $(J\lr{\ell}/J\lr{\ell -1})_{\mu}$
are crucial in writing a free $k[T_0,\ldots, T_n]$-resolution of the module $(\cs_I^*)_{\mu}$
($\mu$ a threshold integer). This resolution is given in Corollary~\ref{cor:genres}.

Section 3 deals with the $\im$-primary case.
The one main result is a sharp lower bound for the threshold degree in terms of $n,d$.
This bound is actually attained in characteristic zero provided the forms ${\mathbf f}$
are general.
The proof depends on the form of the Hilbert series of a well-known $\im$-primary almost complete
intersection -- the guessed form of the series is actually not entirely obvious.
There are at least two ways of getting it, one of which a Lefschetz type of argument.
We added an appendix with a more elementary proof due to Oesterl\'e.

This bound in turn allows, by tuning up a threshold integer $\mu$, to bound the degrees
of the syzygies of $I$ that may appear in the presentation matrix of $(\cs_I^*)_{\mu}$
in the aforementioned free $k[T_0,\ldots, T_n]$-resolution.
As a consequence, the form of the resolution for such a choice of a threshold integer
becomes more explicit (see Corollary~\ref{mu0gen}).

Another piece of interest in this section is that, in the way of proving Theorem~\ref{thm:Fl/Fl-1free},
we obtain in the $\im$-primary case an isomorphism of
$k[T_0,\ldots, T_n]$-modules
$$
			(J\lr{\ell}/J\lr{\ell -1})_{\mu }\simeq (H_1)_{\mu +\ell d}\otimes_k k[T_0,\ldots, T_n](-\ell),
$$
for $\mu$ a threshold integer, $\ell\geq 2$.
We show that this isomorphism is really the expression of a so-called
\emph{downgrading map} (see Proposition~\ref{prop:downgrading}).
Versions of such maps
have been considered in \cite{HSV09} and even earlier in a slightly different form (\cite{HeSV83}).

In Section 4 we try to replay the results of the previous section when $\dim R/I=1$.
We still obtain a good lower bound for the threshold number in terms of $n,d$. The argument is different since there
is no obvious model to compare the respective Hilbert functions as in the $\im$-primary case.

Finally, Section 5 is devoted to a few examples of application in implicitization to illustrate
how the present theory works in practice.

\section{The main theorem}

Let $R:=k[X_1,\ldots , X_n]$, with $n\geq 2$, stand for the standard graded polynomial ring over a field $k$
and let $I\subset R$ denote an ideal generated by $k$-linearly independent forms $\mathbf{f}=\{f_0,\ldots ,f_n\}$
of the same degree $d\geq 1$.
Set $\im :=(X_1,\ldots , X_n)$.

Throughout it will be assumed that $\dim R/I\leq 1$.
In addition, for the purpose of implicitization, we assume that $\dim k[\mathbf{f}]=\dim R$, i.e.,
that the image of the rational map is a hypersurface.

Let $K_i:=K_i(f_0,\ldots ,f_n ;R)$ denote the term of degree $i$ of the Koszul complex associated to $\mathbf{f}$,
with $Z_i, B_i, H_i=Z_i /B_i$ standing for the module of cycles,  the  module of borders and  the homology module
in degree $i$, respectively. Since the ideal $I$ is homogeneous, these modules inherit a natural structure of graded $R$-modules.

Letting $T_0,\ldots ,T_n$ denote new variables over $k$, set $R':=k[T_0,\ldots ,T_n]$ and $S:=R\otimes_k R'
\simeq R[T_0,\ldots ,T_n]$.
Let $J$ stand for the kernel of the following graded $R$-algebra homomorphism
\begin{eqnarray*}
	S & \rightarrow & \cs_I^* :=\sym_R (I)/H^0_\im (\sym_R (I)) \\
	T_i & \mapsto & f_i
\end{eqnarray*}
The ideal $H^0_\im (\sym_R (I))$ -- which could be called the $\im$-{\em torsion} of $\sym_R (I)$ --
is contained in the full $R$-torsion of $\sym_R (I)$.
Therefore, there is a surjective a graded $R$-homomorphism onto the Rees algebra of $I$
$$\cs_I^* \surj\rees_R (I)$$
which is injective if and only if $\nu (I_\ip )=\dim R_\ip$ for every prime $\ip \supset I$
such that $\ip\neq \im$, where $\nu(\_\,)$ denotes minimal number of generators.
In particular, if $I$ has codimension $n$ (i.e., if $I$ is $\im$-primary) then $\cs_I^* $ is the Rees algebra
$\rees_R (I)$.

Given an integer $\ell\geq 0$ we consider the ideal $J\lr{\ell}\subset J$ generated by elements in $J$ whose degree in the $T_i$'s
is at most $\ell$. Thus $J\lr{0}=0$ and $J\lr{1}\simeq \syz_R (f_0,\ldots ,f_n)S$ via the identification of a syzygy $(a_0,\ldots ,a_n)$
with the linear form $a_0T_0+\ldots +a_nT_n$, and $\sym_R (I)\simeq S/J\lr{1}$.

We will denote by $\ks$ (for $\mathfrak{K}$oszul $\mathfrak{S}$yzygies) the $S$-ideal generated by
the elements $f_iT_j-f_jT_i$ ($0\leq i,j\leq n$) and set $\ol{S}:=S/\ks$. Notice that $\ks\subset J\lr{1}$ where the inclusion is strict (for the sequence $\mathbf{f}$ cannot be $R$-regular). Observe also that the module $J\lr{\ell}/J\lr{\ell -1}$ is generated exactly in degree $\ell$.

Finally, for any $\mathbb{N}$-graded module $M$, we will denote
$$\indeg (M):=\inf \{ \mu \ \vert \ M_\mu \not= 0\},$$
with the convention that $\indeg (0)=+\infty$, and
$$\fin (M):= \sup \{ \mu \ \vert \ M_\mu \not= 0 \},$$ with the convention $\fin(0)=-\infty$.

\medskip
We next introduce the basic numerical invariant of this work and give
it a name for the sake of easy reference throughout the text.

\begin{defn}\rm
The {\em threshold degree} of the ideal $I=(\mathbf{f})$ is the integer
$$
	\mu_0 (I): = (n-1)(d-1)-\min\{ \indeg (H_1(\mathbf{f} ;R)),\indeg (H^0_\im (H_1(\mathbf{f};R)))-d\}.
$$
\end{defn}
Any integer $\mu$ such that $\mu\geq \mu_0(I)$ will be likewise referred to as a {\em threshold integer}.

\medskip

Note that the threshold degree does not depend  on the choice of a minimal set of generators.

If $\dim(R/I)\leq 1$ then
$$\mu_0 (I)=\max \{  \fin (H^0_\im (R/I))-d,\fin (H^1_\im (H_1(\mathbf{f} ;R)))-2d \} +1 $$
by Koszul duality.

The threshold degree will play a key role throughout this paper and it will soon become clear why it is called this way.
Notice that whenever $I$ is $\im$-primary, then $\mu_0(I)=\reg(I)-d$, where
$$\reg(I)=\min\{ \nu \textrm{ such that } H^i_\im(I)_{>\nu-i}=0 \}$$
stands for the Castelnuovo-Mumford regularity of $I$.

Also, in the $\im$-primary case, the threshold degree is related to the numerical invariant $r(I)$ introduced in \cite[Theorem 2.14]{HSV09},
namely, one has  $r(I)+d=\mu_0(I)$. 	

The more detailed nature of  $\mu_0(I)$ will be discussed in Sections \ref{sec:mu0:m-p} and \ref{sec:mu0}.

We will hereafter consider the one-side $\mathbb{N}$-grading of $S$ given by $S=\oplus_{\mu\geq 0}S_{\mu}$,
where $S_{\mu}:= R{_\mu}\otimes_k R'$. Likewise, if $M$ is a bigraded $S$-module, then $M_\mu$ stands for the homogeneous
component of degree $\mu$ of $M$ as an $\mathbb{N}$-graded module over $S$ endowed with the one-side grading.
Note that $M_\mu$ is an $R'$-module.

Recall that our standing setup has $\dim(R/I)\leq 1$, i.e., either $I$ is $\im$-primary
or has codimension one less.
Most of the subsequent results will deal with integers (degrees) $\mu$ satisfying $\mu\geq \mu_0(I)$ --
recall that any such integer is being named a threshold integer.
Moreover, it will be assumed throughout that $\dim \sym_R (I)=\dim \rees_R (I)$, which
by \cite[Proposition 8.1]{HeSV83} is tantamount to requiring the well-known bounds
$$\nu (I_\ip )\leq \dim R_\ip +1, \quad \mbox{\rm for every prime ideal}\quad  \ip \supset I.$$
%In the present context, this is no requirement whatsoever if $I$ happens to be $\im$-primary, and in the
%codimension $n-1$ case it is imposing that $I$ be generically generated by $n=\dim R$ elements -- i.e.,
%a drop by one from the global number of equations.

The following basic preliminary seems to have gone unnoticed.

\begin{lem}\label{lem:indeg} Let $\dim (R/I)\leq 1$.
If $\nu (I_\ip )\leq \dim R_\ip +1$ for every prime ideal $\ip \supset I$, then $\indeg(H_2)\geq \indeg(H_1)+d$.
\end{lem}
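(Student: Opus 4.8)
The plan is to reinterpret the Koszul homology of $\mathbf f$ as a graded $\tor$ over a polynomial ring and to read the inequality off a minimal graded free resolution.

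Set $A:=k[T_0,\ldots,T_n]$ and grade it by $\deg T_i=d$, so that $T_i\mapsto f_i$ is a homomorphism of graded $k$-algebras $A\to R$ and $R$ becomes a graded $A$-module, bounded below and with finite-dimensional graded pieces. The Koszul complex $K_\bullet(T_0,\ldots,T_n;A)$ is the minimal graded $A$-free resolution of $k=A/(T_0,\ldots,T_n)$, and tensoring it over $A$ with $R$ reproduces $K_\bullet(\mathbf f;R)$; hence
$$H_i(\mathbf f;R)\simeq \tor_i^A(k,R)\qquad(i\geq 0),$$
an isomorphism of graded modules for the grading in which $\indeg$ is computed. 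I would then pick a minimal graded $A$-free resolution $E_\bullet\to R$: by minimality $k\otimes_A E_\bullet$ has zero differential, so $\tor_i^A(k,R)\simeq E_i/\mathfrak n E_i$ where $\mathfrak n:=(T_0,\ldots,T_n)$, and therefore $\indeg H_i=\indeg E_i$ for every $i$ (with the usual convention $\indeg 0=+\infty$).

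The key point is that $A$ is generated in degree $d$, so that $\mathfrak n=A_{\geq d}$ and every entry of every differential of $E_\bullet$ has degree $\geq d$. Hence, whenever $E_{\ell+1}\neq 0$, take a minimal generator $\epsilon$ of $E_{\ell+1}$ of least degree $m:=\indeg E_{\ell+1}$; its image under the differential of $E_\bullet$ is nonzero, since otherwise $\epsilon$ would lie in the kernel of that differential, hence in the image of $E_{\ell+2}\to E_{\ell+1}$, which is contained in $\mathfrak n E_{\ell+1}$ — contradicting $\epsilon\notin\mathfrak n E_{\ell+1}$. That image is therefore a nonzero homogeneous element of $\mathfrak n E_\ell$ of degree $m$, whence $m\geq d+\indeg E_\ell$. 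Thus $\indeg E_{\ell+1}\geq \indeg E_\ell+d$ whenever $E_{\ell+1}\neq 0$, and the case $\ell=1$ is precisely $\indeg H_2\geq\indeg H_1+d$ (vacuously true if $H_2=0$). Observe that this argument uses only that the $f_i$ are forms of one common degree $d$; the standing hypotheses $\dim R/I\leq 1$ and $\nu(I_\ip)\leq\dim R_\ip+1$ are needed elsewhere (in particular $\dim R/I\leq 1$ forces $H_i=0$ for $i\geq 3$, so $H_2$ is the top nonvanishing Koszul homology).

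The step I expect to require the most care is that $R$ need not be a finitely generated $A$-module when $\dim R/I=1$: already for $\mathbf f=(x^3,x^2y,xy^2)$ in $k[x,y]$ the module $H_1(\mathbf f;R)$ is infinite-dimensional over $k$, so $E_\bullet$ has infinitely many generators. The argument above is written to be insensitive to this, as it uses only minimality of $E_\bullet$ together with the fact that each $E_i$ is graded free with finite-dimensional graded components; still, one must check that no intermediate step silently invokes finiteness of rank or of length. Should a proof staying inside the Koszul complex of $\mathbf f$ be preferred, one could instead contract a $2$-cycle $z$ of degree $\mu\leq\indeg H_1+d-1$ against each $e_j^\ast$ to obtain $1$-cycles of degree $\mu-d<\indeg H_1$, hence Koszul boundaries $dw_j$, and then reassemble the $w_j$ — after correcting them by $2$-cycles and using the identity $\sum_j e_j\wedge\iota_{e_j^\ast}(z)=2z$ — into an element of $K_3$ mapping onto $z$; but this amounts to the same lifting as the resolution argument.
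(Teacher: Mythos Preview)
Your argument is correct, and it is genuinely different from the paper's. You interpret $H_i(\mathbf f;R)$ as $\tor_i^A(k,R)$ for $A=k[T_0,\ldots,T_n]$ with $\deg T_i=d$, and then read off $\indeg H_{i+1}\geq \indeg H_i+d$ from a minimal graded free $A$-resolution of $R$; the care you take with the non-finitely-generated case is warranted and your treatment is sound (graded Nakayama over a connected non-negatively graded $k$-algebra needs only boundedness below and degreewise finiteness, both of which hold here). In fact your proof shows the stronger chain of inequalities $\indeg H_{i+1}\geq \indeg H_i+d$ for all $i$ and uses neither the hypothesis $\dim R/I\leq 1$ nor $\nu(I_\ip)\leq \dim R_\ip+1$.

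The paper instead argues structurally: for $\dim R/I=1$ it chooses $n$ general combinations $\mathbf g$ of the $f_i$, uses the hypothesis $\nu(I_\ip)\leq\dim R_\ip+1$ to force $I^{\mathrm{sat}}=J^{\mathrm{sat}}$ with $J=(\mathbf g)$, and then exploits that $H_1(\mathbf g;R)\simeq\omega_{R/J}$ is $\im$-torsion-free to conclude that multiplication by $f$ (where $I=J+(f)$) is zero on it. This yields not just the inequality but an explicit short exact sequence
\[
0\ra H_1(\mathbf g;R)\ra H_1\ra \bigl(0:_{R/J}f\bigr)[-d]\ra 0
\]
together with $H_2\simeq H_1(\mathbf g;R)[-d]$. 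That extra structure is not idle: the paper reuses precisely this exact sequence later, in the proof of the bound $\mu_0(I)\leq\nu_0(I)$. So your route is cleaner and more general for the bare inequality, while the paper's route extracts finer information about $H_1$ and $H_2$ that feeds into subsequent results. If you adopt your proof, you should be prepared to supply the exact sequence separately where it is invoked downstream.
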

\begin{proof} First, if $\dim (R/I)=0$ then $H_2=0$ and the claimed inequality holds by convention.
Consequently, from now on we assume that $\dim (R/I)=1$.
We may assume that $k$ is an infinite field.	Let ${\bf g}:=\{g_1,\ldots ,g_n\}$ be general $k$-linear combinations of the $f_i$'s
and set $J:=({\bf g})$. Since $\nu (I_\ip )\leq \dim R_\ip +1$ for every non maximal prime ideal $\ip \supset I$,  $J$ and $I$ have the same saturation with respect to $\im$. Also, since the $f_i$'s are minimal generators of $I$, they form a $k$-basis
of the vector space $I_d$. Therefore,  $I=J+(f)$ for some $f\in I_d$, hence
$H_i\simeq H_i ({\bf g},f;R)$ for every integer $i$. Moreover, one has the exact sequence of complexes
$$0 \rightarrow K_\bullet({\bf g};R) \xrightarrow{\iota} K_\bullet({\bf g},f;R) \xrightarrow{\pi}
K_{\bullet -1}({\bf g};R)[-d] \rightarrow 0$$
where $\iota$ is the canonical inclusion and $\pi$ the canonical projection. It yields the long exact sequence
\begin{multline}\label{eq:LongExactSequence}
	\ldots \rightarrow H_2({\bf g};R) \xrightarrow{\overline{\iota_2}}  H_2 \xrightarrow{\overline{\pi_2}}
H_1({\bf g};R)[-d] \xrightarrow{\cdot (-f)} H_1({\bf g};R) \\
	\xrightarrow{\overline{\iota_1}} H_1 \xrightarrow{\overline{\pi_1}} H_0({\bf g};R)[-d]
\xrightarrow{\cdot\, f} H_0({\bf g};R) \xrightarrow{\overline{\iota_0}} H_0 \rightarrow 0
\end{multline} 	
Now, since ${\bf g}$ is an almost complete intersection $H_1({\bf g};R)$ is isomorphic to the canonical
module $\omega_{A/J}$ (see \cite[Proof of Lemma 2]{BuCh05}).
But the latter is annihilated by $J$, hence the map
$$H_1({\bf g};R)[-d] \xrightarrow{\cdot(-f)} H_1({\bf g};R)$$
is the null map. By inspecting \eqref{eq:LongExactSequence}, it follows that $H_2 \simeq H_1({\bf g};R)[-d]$
and that $H_1({\bf g};R)$ is a submodule of $H_1$. Consequently, for every $\nu$ such that $(H_1)_{\nu}=0$,
we have $(H_2)_{\nu+d}=0$.	
\end{proof}

\medskip

We now prove a vanishing result for the local cohomology modules of the modules of cycles of ${\bf f}$
in terms of the threshold degree $\mu_0(I)$.

\begin{prop}\label{vanishing_of_local_cohomology}
Let $\dim (R/I)\leq 1$.
If $\nu (I_\ip )\leq \dim R_\ip +1$ for every prime ideal $\ip \supset I$, then for every threshold integer
$\mu$ one has
\begin{equation}\label{eq:HqZp=0}
	H^q_\im (Z_p )_{\mu +pd}=0 \ \text{ for } \ p\neq q     %\ \text{ and } \  \mu \geq \mu_0 (I).
\end{equation}
\end{prop}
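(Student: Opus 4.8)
The plan is to induct on $p$ and exploit the acyclicity of the Koszul complex together with the short exact sequences
$$0 \to Z_p \to K_p \to B_{p-1} \to 0, \qquad 0 \to B_p \to Z_p \to H_p \to 0.$$
Since $K_p$ is a free $R$-module of the form $\bigoplus R(-pd)$, its local cohomology is concentrated in degree $n$: $H^q_\im(K_p)_{\mu+pd}=0$ for $q\neq n$, and $H^n_\im(R)_\mu = 0$ for $\mu > -n$, i.e. $H^n_\im(K_p)_{\mu+pd}=0$ whenever $\mu > -n$. As $\mu_0(I) \geq -(n-1)$ in all relevant cases (one checks this from the definition, using $\indeg(H_1)\leq$ something controlled, or simply notes that the statement is about $\mu$ a threshold integer and $\mu \geq \mu_0(I)$ suffices to kill the top cohomology of the $K_p$'s), the free modules contribute nothing and the local cohomology of $Z_p$ is governed entirely by that of the $B_{p-1}$ and $H_p$.

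**First I would** handle the base of the induction. For $p$ large (namely $p > n$, or more precisely once $Z_p$ becomes free, which happens since $\dim R/I \leq 1$ forces the $H_i$ to vanish for $i \geq 2$ — wait, $H_2$ need not vanish when $\dim R/I = 1$, but $H_i = 0$ for $i \geq 3$), the cycles $Z_p$ agree with $B_p$ and eventually with a syzygy module of projective dimension forcing $H^q_\im(Z_p) = 0$ for $q < n$; combined with the top-degree vanishing above this settles large $p$. **Then** I would run the induction downward (or set it up as a two-step ascending induction on $p$ using both short exact sequences): from $0 \to B_p \to Z_p \to H_p \to 0$ I get the long exact sequence relating $H^q_\im(B_p)$, $H^q_\im(Z_p)$, $H^q_\im(H_p)$, and from $0 \to Z_{p+1} \to K_{p+1} \to B_p \to 0$ (so $B_p \cong K_{p+1}/Z_{p+1}$, giving $H^q_\im(B_p) \cong H^{q+1}_\im(Z_{p+1})$ for $q \leq n-2$, plus boundary terms at $q = n-1, n$). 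The inductive hypothesis controls $H^{q+1}_\im(Z_{p+1})_{\mu+(p+1)d}$, hence $H^q_\im(B_p)_{\mu+pd+?}$ — here the bookkeeping of the degree shift $[-d]$ hidden in $B_p = \im\,K_{p+1} \to K_p$ versus its presentation must be done carefully.

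**The crux** is the input on $H_1$ and $H_2$: to close the induction I must show $H^q_\im(H_p)_{\mu+pd}=0$ in the appropriate range. For $p \geq 3$ this is free ($H_p = 0$). For $p = 2$, Lemma~\ref{lem:indeg} gives $\indeg(H_2) \geq \indeg(H_1)+d$, and together with the definition of $\mu_0(I)$ — which builds in both $\indeg(H_1(\mathbf{f};R))$ and $\indeg(H^0_\im(H_1(\mathbf{f};R)))-d$ — one reads off that $(H_2)_{\mu+2d} = 0$ and $H^0_\im(H_2)_{\mu+2d}=0$ for $\mu$ a threshold integer; by Koszul self-duality $H_2 \cong \shom_R(H_{n-2},\om_R)$-type identifications (or directly via the alternative formula for $\mu_0(I)$ in terms of $\fin(H^1_\im(H_1))$) the higher $H^q_\im(H_2)$ are likewise killed in degree $\mu+2d$. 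For $p = 1$ the needed vanishing $H^0_\im(H_1)_{\mu+d}=0$ and $H^q_\im(H_1)_{\mu+d}=0$ for $q\neq 1$ is, again, exactly what the two defining terms of $\mu_0(I)$ (resp. the $\fin$-reformulation) are designed to guarantee. So **the main obstacle** I anticipate is not any single hard estimate but rather the careful assembly: threading the degree shifts through the two families of short exact sequences, tracking which cohomological degrees $q$ the boundary maps connect, and verifying that at each stage the only possibly-nonzero contributions land exactly on the diagonal $q = p$ that the proposition allows — i.e. showing the off-diagonal vanishing propagates. I would organize this by proving simultaneously, by descending induction on $p$, the statement "$H^q_\im(Z_p)_{\mu+pd}=0$ for all $q \neq p$", peeling off $H^n_\im$ of the free terms first and then using Lemma~\ref{lem:indeg} and the $\mu_0(I)$ definition precisely at the two spots $p=1,2$ where Koszul homology is nonzero.
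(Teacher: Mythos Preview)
Your plan differs from the paper's and, as organized, has a directional gap. The paper does not induct: it quotes the graded isomorphisms from \cite[Lemma~1]{BuCh05},
\[
H^q_\im(Z_p)\simeq
\begin{cases}
0 & q<2\text{ or }q>n,\\
H^0_\im(H_{q-p})^*[n-(n+1)d] & q=2,\\
H_{q-p}^*[n-(n+1)d] & 2<q\le n-1,\\
Z_{n-p}^*[n-(n+1)d] & q=n,
\end{cases}
\]
and then checks the degree bounds case by case, invoking Lemma~\ref{lem:indeg} once (for the range $3\le q\le n-1$) and using $\indeg(B_{n-p})=(n-p+1)d$ to reduce the case $q=n$ to the previous one. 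No induction, no exact-sequence chase.

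Your descending induction on $p$ does not close. There is in fact \emph{no} degree shift in the sequence $0\to Z_{p+1}\to K_{p+1}\to B_p\to 0$ (the Koszul differential is degree~$0$ once $K_i=R(-id)^{\binom{n+1}{i}}$), so the hypothesis at $p+1$ gives $H^q_\im(B_p)_\nu=0$ only for $\nu\ge\mu_0+(p+1)d$, whereas you need it for $\nu\ge\mu_0+pd$; you lose $d$ at every step. The ascending direction you mention parenthetically would repair this, but then several of the inputs you list are misplaced: $H_1$ enters at the step $p=2$, where the required vanishing is $H^1_\im(H_1)_{\mu+2d}=0$ (precisely the $\fin(H^1_\im(H_1))-2d$ clause of $\mu_0$), not $H^0_\im(H_1)_{\mu+d}=0$; and at $p=1$, $q=3$ one must show $H^1_\im(R/I)_{\mu+d}=0$, which is not visible from the definition of $\mu_0(I)$ without first identifying $H^1_\im(R/I)\simeq H_2^*[n-(n+1)d]$ and then applying Lemma~\ref{lem:indeg}. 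At that point you have essentially rederived the \cite{BuCh05} formulas the paper uses directly.
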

\begin{proof}
Consider the approximation complex of cycles associated to the ideal $I=(f_0,\ldots,f_n)$. We denote it by $\Z$.
By definition, we have $\Z_i =Z_i [id]\otimes_R S\{ -i\}$ so that $(\Z_i )_\mu =(Z_i)_{\mu +id}\otimes_k R'\{ -i\}$.
As proved in \cite[Theorem 4]{BuCh05}, the complex $\Z$ is acyclic under our assumptions. To take advantage of the
spectral sequences associated to the double complex $\mathcal{C}_\im^p(\Z_q)$, the knowledge of the local cohomology
of the cycles of the Koszul complex associated to $f_0,\ldots,f_n$ is helpful.
One has the following graded (degree zero) isomorphisms of $R$-modules \cite[Lemma 1]{BuCh05}:
	\begin{equation}\label{eq:isoHqZp}
	 H^q_\im (Z_p ) \simeq
	\begin{cases}
		0&\quad {\rm if}\ q<2\ {\rm or}\ q>n \\
		 H^0_\im (H_{q-p})^*[n-(n+1)d]&\quad {\rm if}\ q=2 \\
		{H_{q-p}}^*[n-(n+1)d]&\quad {\rm if}\ 2< q\leq n-1 \\
		 {Z_{n-p}}^{*}[n-(n+1)d]&\quad {\rm if}\ q=n \\
	\end{cases}		
	\end{equation}
 	 where $\hbox{---}^{*}:=\homgr_R (\hbox{---},k)$, and also \cite[Proof of Lemma 1]{BuCh05}
\begin{equation}\label{eq:isoH2Z2}
	 H^0_\im(H_0)^*[n-(n+1)d]\simeq H^2_\im(Z_2)\simeq H^0_\im(H_1)
\end{equation}
We now consider various cases.

The vanishing is obvious if $q<2$ and $q>n$.

If $q=2$ then there is only one non-trivial module
$$ H^2_\im(Z_1)\simeq H^0_\im(H_1)^*[n-(n+1)d]$$
(observe that $Z_0=S$ so that $H^2_\im(Z_0)_\mu=0$ for $\mu\geq -1$). This isomorphism shows that
$ H^2_\im(Z_1)_\mu=0$ for every
$$\mu \geq (n+1)d-n+1-\indeg(H^0_\im(H_1))=(n-1)(d-1)+2d-\indeg(H^0_\im(H_1))$$
so that $H^2_\im(Z_1)_{\mu+d}=0$ for every  $\mu\geq (n-1)(d-1)-(\indeg(H^0_\im(H_1))-d)$.

Now, assume that $3\leq q \leq n-1$. From \eqref{eq:isoHqZp} one only has to consider the modules $H_1^*$
and $H_2^*$ because $H_i=0$ if $i>2$.
By definition, $H_1^*[n-(n+1)d]_\mu=0$ for every  $\mu \geq (n-1)(d-1)+2d-\indeg(H_1)$ and similarly
$H_2^*[n-(n+1)d]_\mu=0$ for every
$\mu \geq (n-1)(d-1)+2d-\indeg(H_2)$.
It follows that $H^q_\im (Z_p )_{\mu +pd}=0$, $p\neq q$, for every
$$\mu \geq (n-1)(d-1)-\min(\indeg(H_1),\indeg(H_2)-d) \geq (n-1)(d-1)-\indeg(H_1)$$
where the last inequality holds by Lemma \ref{lem:indeg}.

Finally, if $q=n$ then
for every  $\mu \in \mathbb{Z}$ and $p<n$ we have
$$ H^n_\im (Z_p)_{\mu+pd} \simeq ({Z_{n-p}}^{*})_{\mu+n-(n-p+1)d}$$
Moreover, since $\indeg(B_{n-p})=(n-p+1)d$ the exact sequence
$$0\rightarrow B_{n-p} \rightarrow Z_{n-p} \rightarrow H_{n-p} \rightarrow 0$$
 shows that $$({Z_{n-p}}^{*})_{\mu+n-(n-p+1)d}\simeq ({H_{n-p}}^{*})_{\mu+n-(n-p+1)d}$$ for every  $\mu$
 such that $\mu+n-(n-p+1)d > - (n-p+1)d$, that is to say for every  $\mu > -n$. Therefore, we get that
 for every  $\mu > -n $ and $p<n$
$$ H^n_\im (Z_p)_{\mu+pd} \simeq ({H_{n-p}}^{*})_{\mu+n-(n-p+1)d}$$
It follows, as in the previous case, that
$H^n_\im (Z_p)_{\mu +pd} =0$ for $\mu > \mu_0(I)$ and $p<n$.
\end{proof}

\begin{thm}\label{thm:Fl/Fl-1free}
	Let $\dim (R/I)\leq 1$. If $\nu (I_\ip )\leq \dim R_\ip +1$ for every prime ideal $\ip \supset I$ then
for every threshold integer
	$\mu$, one has:
		\begin{itemize}
			\item[{\rm (i)}]  $H^{i}_\im (\sym_R (I))_\mu =0$ for $i>0$ and
			\item[{\rm (ii)}] for every $\ell\geq 2$ the $R'$-module
			$(J\lr{\ell}/J\lr{\ell -1})_\mu$ is free of rank
		\end{itemize}

	$$\left\{\begin{array}{ll}
		\dim_k (H^0_\im (H_1))_{\mu +2d} & \mbox{\rm if}\quad	\ell =2 \\[5pt]
		\dim_k (H^0_\im (H_1))_{\mu +\ell d}+\dim_k (R/I^{sat})_{(n+1-\ell )d-n-\mu  } & \mbox{\rm otherwise}.
		\end{array}\right.
$$
\end{thm}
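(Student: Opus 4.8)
The plan is to extract both statements from a careful study of the two spectral sequences associated to the double complex $\mathcal{C}^\bullet_\im(\Z_\bullet)$ obtained by applying Čech cohomology to the approximation complex of cycles $\Z$. The complex $\Z$ is acyclic (by \cite[Theorem 4]{BuCh05}) with $H_0(\Z)=\sym_R(I)$, so one spectral sequence collapses to give $H^i_\im(\sym_R(I))$ on the abutment in the relevant total degrees, while the other has $E_1^{p,q}=H^q_\im(\Z_p)$, i.e.\ in internal degree $\mu$ the term $H^q_\im(Z_p)_{\mu+pd}\otimes_k R'\{-p\}$. By Proposition~\ref{vanishing_of_local_cohomology}, for a threshold integer $\mu$ the only surviving terms on the $E_1$ page (in internal degree $\mu$) are the diagonal entries $p=q$. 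First I would use this to prove (i): the diagonal-only shape forces all the differentials out of and into the positions contributing to $H^i_\im(\sym_R(I))_\mu$ for $i\geq 1$ to vanish, and tracking the total-degree bookkeeping shows that the only possibly nonzero abutment in those degrees comes from $H^0_\im(Z_0)=H^0_\im(S)=0$; hence $H^i_\im(\sym_R(I))_\mu=0$ for $i>0$.

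For part (ii), the idea is to compare $\Z$ with the Koszul complex $K_\bullet$ of $\mathbf{f}$ over $S$, whose homology computes the $J\lr{\ell}/J\lr{\ell-1}$. Recall $\sym_R(I)=S/J\lr{1}=\ol S/(\ks\text{-image})$ up to the identification $J\lr{1}\simeq\syz_R(\mathbf f)S$, and that $J\lr{\ell}/J\lr{\ell-1}$ is generated in $T$-degree exactly $\ell$. The strategy is: in each fixed internal degree $\mu$, the graded piece $(J\lr{\ell}/J\lr{\ell-1})_\mu$ is an $R'$-module which, because $\mu$ is a threshold integer and because of the vanishing in Proposition~\ref{vanishing_of_local_cohomology}, can be identified with (a twist of) $(Z_\ell)_{\mu+\ell d}\otimes_k R'$ modulo the image of $(Z_{\ell-1})_{\mu+(\ell-1)d}\otimes_k R'$ under the relevant Koszul-type map — this is exactly the computation of a subquotient of the $E_\infty$ page of the cycle spectral sequence in bidegree $(\ell,\ell)$. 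That subquotient is, on the diagonal, controlled by $H^\ell_\im(Z_\ell)_{\mu+\ell d}$, and by the isomorphisms \eqref{eq:isoHqZp}–\eqref{eq:isoH2Z2} this is dual (up to shift) to $H^0_\im(H_0)$ when $\ell=2$ — no wait, one must be careful: for $\ell=2$ the relevant contribution is $H^0_\im(H_1)$ via \eqref{eq:isoH2Z2}, giving the rank $\dim_k(H^0_\im(H_1))_{\mu+2d}$; for $\ell\geq 3$ there is an additional contribution from the top cohomology $H^n_\im$ piece, which by \eqref{eq:isoHqZp} is dual to $H_0=R/I$ — and after accounting for the shift $[n-(n+1)d]$ and the internal twist, its graded dimension in the relevant degree is $\dim_k(R/I^{sat})_{(n+1-\ell)d-n-\mu}$, since $H_0$ and $R/I^{sat}$ agree in high degrees and the degree in question is in the range where saturation does not matter. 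Freeness over $R'$ is automatic because each of these modules is of the form $(\text{finite-dimensional $k$-vector space})\otimes_k R'$ up to a shift, the $R'$-structure being the trivial one coming from the bigrading.

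The main obstacle I expect is the precise bookkeeping of which page the relevant $E$-terms stabilize on and the verification that no differentials — in particular the higher differentials $d_r$ for $r\geq 2$, which move off the diagonal — can hit or emanate from the diagonal positions in the threshold range; one needs Proposition~\ref{vanishing_of_local_cohomology} for a whole window of internal degrees (not just $\mu$ itself) to kill these, and one must check the window is wide enough, which is where the inequality $\indeg(H_2)\geq\indeg(H_1)+d$ from Lemma~\ref{lem:indeg} gets used a second time. A secondary delicate point is matching the abstract subquotient of $E_\infty$ with the concrete module $(J\lr{\ell}/J\lr{\ell-1})_\mu$: this requires identifying the approximation-complex cycles $Z_\ell$ with the appropriate strand of the Koszul syzygies of $\mathbf f$ over $S$ and checking that the connecting maps agree, which is essentially the content of how $\Z$ is built from $K_\bullet(\mathbf f;R)$ by base change and twisting. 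Once these identifications are in place the rank formula falls out of \eqref{eq:isoHqZp} and \eqref{eq:isoH2Z2} together with the elementary fact that $(R/I)_\nu=(R/I^{sat})_\nu$ for $\nu$ large, combined with $\fin(H^0_\im(R/I))<\mu_0(I)+d$.
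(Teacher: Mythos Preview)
Your spectral-sequence framework and the argument for (i) are essentially the paper's. Note, however, that once Proposition~\ref{vanishing_of_local_cohomology} kills the off-diagonal $E_1$ terms in the fixed internal degree $\mu$, every differential $d_r$ (which shifts off the diagonal) vanishes automatically and the sequence degenerates at $E_1$ in that degree; your worry about needing vanishing over a ``whole window of internal degrees'' is unfounded, and Lemma~\ref{lem:indeg} is not used a second time here.

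For (ii) there are two genuine gaps. First, the identification of the spectral-sequence filtration $F_\ell$ on $H^0_\im(\sym_R(I))_\mu=(J/J\lr{1})_\mu$ with the filtration by the $(J\lr{\ell}/J\lr{1})_\mu$ is not obtained by comparing $\Z$ with the Koszul complex of $\mathbf{f}$ over $S$; the homology of that complex does not compute $J\lr{\ell}/J\lr{\ell-1}$. The paper's argument is much more direct: the quotient $F_\ell/F_{\ell-1}\simeq H^\ell_\im(\Z_\ell)_\mu$ is a free $R'$-module generated in $T$-degree exactly $\ell$, and so is $(J\lr{\ell}/J\lr{\ell-1})_\mu$. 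Since both are filtrations of the same module whose successive quotients are concentrated in a single, strictly increasing $T$-degree, an easy induction forces $F_\ell=(J\lr{\ell}/J\lr{1})_\mu$.

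Second, your account of the $(R/I^{\rm sat})$ summand in the rank for $\ell\geq 3$ is wrong. It is not an ``additional contribution from the top cohomology $H^n_\im$ piece'': each $F_\ell/F_{\ell-1}$ is a \emph{single} diagonal term $H^\ell_\im(Z_\ell)_{\mu+\ell d}\otimes_k R'\{-\ell\}$, and by \eqref{eq:isoHqZp} this is $(H_0^*[n-(n+1)d])_{\mu+\ell d}\otimes_k R'\{-\ell\}$. The two-term rank formula then comes from the short exact sequence \eqref{eq:HO*-H2Z2},
\[
0\longrightarrow H^1_\im(H_2)\longrightarrow H_0^*[n-(n+1)d]\longrightarrow H^2_\im(Z_2)\longrightarrow 0,
\]
together with $H^2_\im(Z_2)\simeq H^0_\im(H_1)$ from \eqref{eq:isoH2Z2} and local duality $H^1_\im(H_2)\simeq (R/I^{\rm sat})^*[n-(n+1)d]$. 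That is how $R/I^{\rm sat}$ (rather than $R/I$) enters; it is not via the observation that $(R/I)_\nu=(R/I^{\rm sat})_\nu$ for large $\nu$.
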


\begin{proof}
By definition of the $\Z$-complex, $H^q_\im (Z_p )_{\mu +pd}=H_\im^q(\Z_p)_\mu$ for any $\mu$.
Fix an integer
		 $\mu\geq \mu_0 (I)$. By (\ref{eq:HqZp=0}),  the spectral sequence
$H^q_\im (\Z_p )_\mu \Rightarrow H^{q-p}_\im (\sym_R (I))_\mu$  implies
		 that $H^{i}_\im (\sym_R (I))_\mu =0$ for $i>0$, which proves (i), also providing
		 a filtration of the $R'$-module $H^0_\im (\sym_R (I))_\mu $
		\begin{equation}\label{filtration_from_spectral}
		0=F_0=F_1\subset F_2 \subset \cdots \subset F_t =H^0_\im (\sym_R (I)_\mu )=(J/J\lr{1})_\mu
		\end{equation}
	such that, by \eqref{eq:isoH2Z2},
	\begin{eqnarray}\label{eq:F2}
		F_2&\simeq & H^2_\im(\Z_2)_\mu \simeq H^0_\im(H_0)^*[n-(n+1)d]_{\mu+2d} \otimes_k R'\{ -2 \}\nonumber\\
&\simeq & H^0_\im (H_1)_{\mu + 2d}\otimes_k R'\{ -2 \}.
\end{eqnarray}
Clearly, \eqref{eq:F2} shows  that $F_2$ is a finite free $R'$-module of rank
$\dim_k (H^0_\im (H_1))_{\mu +2d}$.	
	
By a similar token, for every  $\ell \geq 3$,
	\begin{equation}\label{eq:Fl/Fl-1}
		F_\ell /F_{\ell-1}\simeq H^\ell_\im(\Z_\ell)_\mu  \simeq {H_0}^*[n-(n+1)d]_{\mu+ \ell d}\otimes_k R'\{ -\ell \}	
	\end{equation}
	 In particular, $F_\ell /F_{\ell-1}$ is a free $R'$-module which is generated in degree $\ell$.
	Now, as $(J\lr{\ell}/J\lr{\ell -1})_\mu$ is also generated in degree $\ell$, an easy recursive argument on $\ell \geq 1$
	yields $F_\ell =(J\lr{\ell}/J\lr{1})_\mu$.

Therefore $(J\lr{2}/J\lr{1})_\mu$ is a free $R'$-module of rank
$\dim_k (H^0_\im (H_1))_{\mu +2d}$, which shows the case $\ell=2$ of item (ii).

 To get the case $\ell\geq 3$, recall that $H^1_\im(Z_2)=0$.  According to \cite[Equation (3) in Proof 3]{BuCh05},
 there is an exact sequence of graded $R$-modules
\begin{equation}\label{eq:HO*-H2Z2}
0 \rightarrow  H^1_\im(H_2) \rightarrow {H_0}^*[n-(n+1)d] \rightarrow H^2_\im(Z_2) \rightarrow 0	
\end{equation}
Moreover, $H^2_\im(Z_2)\simeq H^0_\im(H_1)$ by \eqref{eq:isoH2Z2} and by local duality
$$H^1_\im(H_2) \simeq (R/I^{\rm sat})^*[n-(n+1)d].$$
Therefore, we deduce
that if $\ell \geq 3$ then $F_\ell /F_{\ell-1}\simeq (J\lr{\ell}/J\lr{\ell -1})_\mu$ is a finite
free $R'$-module of rank
$$ \dim_k ((H_0)^*[n-(n+1)d])_{\mu+ \ell d} = \dim_k (H^0_\im (H_1))_{\mu +\ell d}+\dim_k (R/I^{\rm sat})_{(n+1-\ell )d-n-\mu  }.$$
This finishes the proof.
\end{proof}

\begin{cor}\label{cor:genres} Let $\dim (R/I)\leq 1$. If $\nu (I_\ip )\leq \dim R_\ip+1$ for every
prime ideal $\ip \supset I$ then, for every  threshold integer $\mu$,
			the  $R'$-module $(\cs_I^*)_\mu$ admits a minimal graded free $R'$-resolution of the form
			\begin{equation}\label{eq:resolutionSI*}
				\cdots \ra (\Z_i)_\mu \ra \cdots \ra (\Z_2)_\mu \ra (\Z_1)_\mu
				\oplus_{\ell=2}^{n}(J\lr{\ell}/J\lr{\ell -1})_{\mu} \ra
				(\Z_0)_\mu=R_\mu\otimes_k R' .
			\end{equation}
		 % where $F_{\ell}$ is defined as in {\rm (}\ref{filtration_from_spectral}--\ref{eq:Fl/Fl-1}{\rm )}.
		\end{cor}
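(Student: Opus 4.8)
The plan is to produce the resolution by a mapping cone: splice the approximation complex $\Z$, taken in degree $\mu$, with the (trivial, because free) resolution of the $\im$-torsion of $\sym_R(I)$. The preliminary observation is that $(\Z_\bullet)_\mu$ is already a minimal graded free $R'$-resolution of $(\sym_R(I))_\mu$. Indeed, $\Z$ is acyclic under the present hypotheses (\cite[Theorem 4]{BuCh05}) with $H_0(\Z)=\sym_R(I)=S/J\lr 1$; the functor $(-)_\mu$ is exact; each $(\Z_i)_\mu=(Z_i)_{\mu+id}\otimes_k R'\{-i\}$ is finite free over $R'$; and minimality holds because the differentials of $\Z$ are linear in $T_0,\ldots,T_n$ and hence have all their entries in $(T_0,\ldots,T_n)$.

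Next I would invoke the tautological exact sequence of graded $R'$-modules
$$0\ra H^0_\im(\sym_R(I))_\mu\ra (\sym_R(I))_\mu\ra (\cs_I^*)_\mu\ra 0.$$
By Theorem~\ref{thm:Fl/Fl-1free} and its proof, $H^0_\im(\sym_R(I))_\mu=(J/J\lr 1)_\mu$ is a finite free $R'$-module: the filtration \eqref{filtration_from_spectral} has free successive quotients $(J\lr\ell/J\lr{\ell-1})_\mu$, $\ell\geq 2$ (each generated in degree $\ell$); these quotients are computed by the spectral-sequence terms $H^\ell_\im(\Z_\ell)_\mu$, and they vanish for $\ell>n$ because $\Z_\ell=0$ as soon as $\ell\geq n+1$ (one has $Z_{n+1}=H_{n+1}(\mathbf f;R)=(0:_R I)=0$ since $R$ is a domain); and a filtration with free successive quotients splits. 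Hence $H^0_\im(\sym_R(I))_\mu\simeq\bigoplus_{\ell=2}^n (J\lr\ell/J\lr{\ell-1})_\mu$ as graded $R'$-modules.

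I would then run the mapping cone. Put $C_\bullet:=(\Z_\bullet)_\mu$ and regard $K:=H^0_\im(\sym_R(I))_\mu$ as a resolution concentrated in homological degree $0$. Lifting the inclusion $K\hookrightarrow H_0(C_\bullet)=(\sym_R(I))_\mu$ to a graded chain map $K\ra C_\bullet$ — which exists since $K$ is projective — one gets a map $K\ra C_0=R_\mu\otimes_k R'$ whose image lies in $(T_0,\ldots,T_n)^2C_0$, simply because $K$ is generated in $T$-degrees $2,\ldots,n$ while $C_0$ sits in $T$-degree $0$. As $K\hookrightarrow(\sym_R(I))_\mu$ is injective with cokernel $(\cs_I^*)_\mu$, the mapping cone of this chain map is a free $R'$-resolution
$$\cdots\ra C_2\ra C_1\oplus K\ra C_0\ra (\cs_I^*)_\mu\ra 0,$$
and feeding in $C_i=(\Z_i)_\mu$ together with the decomposition of $K$ above yields precisely \eqref{eq:resolutionSI*}. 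Minimality is immediate: the differentials inherited from $C_\bullet$ are $T$-linear, and the one extra block $K\ra C_0$ of the first differential lands in $(T_0,\ldots,T_n)^2C_0$, so every entry of every differential lies in $(T_0,\ldots,T_n)$.

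The only genuinely substantial ingredient is Theorem~\ref{thm:Fl/Fl-1free}, which tells us that in each threshold degree the $\im$-torsion of $\sym_R(I)$ is a finite free $R'$-module. Granting that, the rest is formal: the splitting of $K$ off the first syzygies of $(\cs_I^*)_\mu$ is automatic from projectivity, the mapping-cone bookkeeping is routine, and the only small point to check is the truncation of the direct sum at $\ell=n$, which rests on the elementary vanishing $H_{n+1}(\mathbf f;R)=0$ used above. I expect the main care to go into keeping track of the internal degrees and the $R'$-grading shifts throughout, rather than into any serious obstruction.
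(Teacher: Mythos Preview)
Your argument is correct and essentially the same as the paper's: both rest on Theorem~\ref{thm:Fl/Fl-1free} to get that $(J/J\lr{1})_\mu$ is a finite free $R'$-module, and then finish by routine homological algebra. The paper phrases the last step as a Tor computation (the split sequences $0\ra J\lr{\ell-1}_\mu\ra J\lr{\ell}_\mu\ra (J\lr{\ell}/J\lr{\ell-1})_\mu\ra 0$ give $\tor_i^{R'}(J_\mu,k)\simeq\tor_i^{R'}(J\lr{1}_\mu,k)$ for $i>0$), while you phrase it as a mapping cone; these are two packagings of the same fact.
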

		\begin{proof}
			By Theorem~\ref{thm:Fl/Fl-1free}, $(J\lr{\ell}/J\lr{\ell -1})_\mu $ is a free $R'$-module for $\ell \geq 2$. Therefore
			the split exact sequence
				$$0\ra J\lr{\ell -1}_\mu \ra J\lr{\ell}_\mu \ra (J\lr{\ell}/J\lr{\ell -1})_\mu \ra 0$$
				gives rise to an exact sequence
				$$
				0\ra J\lr{\ell -1}_\mu \otimes_{R'} k\ra J\lr{\ell}_\mu \otimes_{R'} k\ra (J\lr{\ell}/J\lr{\ell -1})_\mu \otimes_{R'} k\ra 0,		 $$
				which shows that the minimal generators of $J\lr{\ell}_\mu$ are the minimal generators of $J\lr{\ell -1}_\mu$ plus the generators of $(J\lr{\ell}/J\lr{\ell -1})_\mu$.
				Moreover, we also deduce the isomorphisms $\tor_i^{R'}(J\lr{\ell}_\mu ,k)\simeq \tor_i^{R'}(J\lr{\ell -1}_\mu ,k)$ for $i>0$ that imply by induction that $\tor_i^{R'}(J_\mu ,k)\simeq \tor_i^{R'}(J\lr{1}_\mu ,k)$ for $i>0$. It follows that the complex \eqref{eq:resolutionSI*}, which is built by adding to the complex $(\Z_\bullet)_\mu$ the canonical map
				$$\oplus_{\ell=2}^{n}(J\lr{\ell}/J\lr{\ell -1})_{\mu} \ra
				R_\mu\otimes_k R'$$
is acyclic.
		\end{proof}
		
		\begin{cor}
			Let $\dim (R/I)\leq 1$. If $\nu (I_\ip )\leq \dim R_\ip$ for every
			non maximal prime ideal $\ip \supset I$ then, for every  threshold integer $\mu$,
						the  $R'$-module $\rees_R (I)_\mu$ admits \eqref{eq:resolutionSI*} as a minimal graded free $R'$-resolution
		\end{cor}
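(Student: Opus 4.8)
The plan is to reduce the corollary to the two preceding results, Corollary~\ref{cor:genres} and the characterization of when the surjection $\cs_I^* \surj \rees_R(I)$ is an isomorphism. First I would recall from the discussion right after the definition of $J$ that the graded $R$-algebra map $\cs_I^* \surj \rees_R(I)$ is injective precisely when $\nu(I_\ip) = \dim R_\ip$ for every prime $\ip \supset I$ with $\ip \neq \im$; this is exactly the hypothesis of the present corollary. Hence under that hypothesis one has an equality $\cs_I^* = \rees_R(I)$ of bigraded $S$-modules, and therefore $(\cs_I^*)_\mu = \rees_R(I)_\mu$ as $R'$-modules for every $\mu$.

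Next I would observe that the hypothesis $\nu(I_\ip) \leq \dim R_\ip$ for all non-maximal primes $\ip \supset I$ in particular forces $\nu(I_\ip) \leq \dim R_\ip + 1$ for \emph{every} prime $\ip \supset I$ (the inequality at $\ip = \im$ being automatic since $\dim R_\im + 1 = n+1 \geq n+1 = \nu(I)$, and for non-maximal primes it is even stronger). Consequently the hypotheses of Corollary~\ref{cor:genres} are satisfied, so for every threshold integer $\mu$ the $R'$-module $(\cs_I^*)_\mu$ admits \eqref{eq:resolutionSI*} as a minimal graded free $R'$-resolution. Combining this with the identification $(\cs_I^*)_\mu = \rees_R(I)_\mu$ from the previous paragraph yields immediately that \eqref{eq:resolutionSI*} is a minimal graded free $R'$-resolution of $\rees_R(I)_\mu$, which is the assertion.

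I do not expect any genuine obstacle here: the corollary is essentially a specialization of Corollary~\ref{cor:genres} obtained by strengthening the local-number-of-generators bound just enough to collapse $\cs_I^*$ onto $\rees_R(I)$. The only point that deserves a line of care is checking that the stronger hypothesis does entail the weaker one used in Corollary~\ref{cor:genres}, and recalling the precise statement (already recorded in the text) that identifies the kernel of $\cs_I^* \surj \rees_R(I)$ with the failure of the equalities $\nu(I_\ip) = \dim R_\ip$ at the non-maximal primes containing $I$. Once these two bookkeeping items are in place the proof is a one-sentence deduction.
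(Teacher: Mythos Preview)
Your proposal is correct and follows exactly the paper's approach: the paper's proof is the single sentence that $\cs_I^*=\rees_R(I)$ if and only if $\nu(I_\ip)=\dim R_\ip$ for every non-maximal prime $\ip\supset I$, and you unpack this and additionally verify that the hypothesis of Corollary~\ref{cor:genres} is met. One small remark: when you say the injectivity criterion ``is exactly the hypothesis of the present corollary,'' note that the corollary assumes $\nu(I_\ip)\le\dim R_\ip$ while the criterion requires equality; the two coincide here because any non-maximal prime $\ip\supset I$ is minimal over $I$ (as $\dim R/I\le 1$), so Krull's height theorem forces $\nu(I_\ip)\ge\dim R_\ip$.
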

\begin{proof}
	This follows from the well known property that
	$\cs_I^*=\rees_R (I)$ if and only if $\nu (I_\ip )=\dim R_\ip$ for every  non maximal prime ideal $\ip \supset I$.	 
\end{proof}

\section{The $\im$-primary case}

In this section, we will concentrate on the case where the ideal $I$ is  $\im$-primary.
As we will see, in such a situation it is remarkable that all the syzygies of $I$ used in the
matrix-based representation of a hypersurface $\ch$ obtained by this method can be recovered from the linear
syzygies via downgrading maps; this is described in Section \ref{downgrading}.

Recall that in the $\im$-primary case, $H^1_\im(H_1)=0$, $H^0_\im(H_1)=H_1$ and we have
$$\mu_0=\reg(I)-d=\fin(H^0_\im(R/I))-d+1= n(d-1) - \indeg(H_1)+1$$

%\begin{rem}\rm In \cite[Theorem 2.14]{HSV09}, a certain integer $r(I)$ is introduced in the $\im$-primary case.
%As it turns, $r(I)+d=\mu_0(I)$. 	
%\end{rem}

\subsection{Bounds for the threshold degree}\label{sec:mu0:m-p}

It is clear that $\mu_0(I)\leq (n-1)(d-1)$. In this section, we will give a sharp lower bound for $\mu_0(I)$.
We begin with a preliminary result.

 \begin{prop}
	We have
	$$
	\fin (R/I)\geq \left\lfloor {{(n+1)(d-1)}\over{2}}\right\rfloor
	$$
	and equality holds if the forms $f_0,\ldots ,f_n$ are sufficiently general, and $k$ has characteristic $0$.
 \end{prop}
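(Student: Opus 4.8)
The plan is to produce an $\mathfrak m$-primary almost complete intersection whose Hilbert function I can compute, and read off $\fin(R/I)$ from it. Since $I$ is generated by $n+1$ forms of degree $d$ with $\dim R/I = 0$, the local number of generators drops by at most one at the maximal ideal, so $I$ is, generically, an almost complete intersection: there exist $n$ general linear combinations $g_1,\dots,g_n$ of the $f_i$ that form a regular sequence, and $I = (g_1,\dots,g_n) + (f)$ for one extra form $f$ of degree $d$. The first step is therefore to fix such a model. Writing $A := R/(g_1,\dots,g_n)$, a complete intersection of $n$ forms of degree $d$ in $n$ variables, $A$ is a graded Artinian Gorenstein ring with $\fin(A) = n(d-1)$ and symmetric Hilbert function $\sum_{\mu} (\dim_k A_\mu) t^\mu = \left(\frac{1-t^d}{1-t}\right)^n$. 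Then $R/I = A/(\bar f)$, and the claim becomes a statement about the top nonvanishing degree of $A/(\bar f)$ when $\bar f \in A_d$ is general.

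The heart of the argument is the lower bound $\fin(A/(\bar f)) \geq \lfloor (n+1)(d-1)/2 \rfloor$ and the corresponding generic equality. For the lower bound, I would use the exact sequence
$$
A(-d) \xrightarrow{\;\cdot \bar f\;} A \longrightarrow A/(\bar f) \longrightarrow 0,
$$
so that $\dim_k (A/(\bar f))_\mu = \dim_k A_\mu - \operatorname{rank}(\cdot \bar f)_{\mu-d,\mu} \geq \dim_k A_\mu - \dim_k A_{\mu - d}$. Setting $s := \lfloor (n+1)(d-1)/2 \rfloor$, Gorenstein symmetry of $A$ gives $\dim_k A_s = \dim_k A_{n(d-1)-s}$, and because $n(d-1)-s - (s-d) = (n+1)(d-1) - 2s + 1 \geq 1 > 0$ one checks by unimodality of the Hilbert function of $A$ (true for a complete intersection of forms of equal degree, e.g. by a hard-Lefschetz or combinatorial argument on $\left(\frac{1-t^d}{1-t}\right)^n$) that $\dim_k A_s > \dim_k A_{s-d}$, hence $(A/(\bar f))_s \neq 0$. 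This gives $\fin(R/I) \geq s$ regardless of genericity.

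For the equality statement in characteristic $0$ with general forms, I would invoke that multiplication by a general linear form — or more to the point, a general form of degree $d$ — on a general Artinian complete intersection has maximal rank in every degree (the Weak/Strong Lefschetz property, which holds for monomial complete intersections $R/(X_1^d,\dots,X_n^d)$ in characteristic $0$ by Stanley's torus-representation argument, hence for general ones by semicontinuity). Maximal rank means $\dim_k(A/(\bar f))_\mu = \max\{0, \dim_k A_\mu - \dim_k A_{\mu-d}\}$ for all $\mu$; by unimodality this difference is $0$ for $\mu$ just past the midpoint, so $\fin(A/(\bar f))$ is exactly the largest $\mu$ with $\dim_k A_\mu > \dim_k A_{\mu-d}$, and a direct computation with the symmetric unimodal sequence $\left(\frac{1-t^d}{1-t}\right)^n$ identifies that value as $\lfloor (n+1)(d-1)/2 \rfloor$. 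The main obstacle is precisely this last input: one needs that a general degree-$d$ form on a general complete intersection of degree-$d$ forms acts with maximal rank, which is a Lefschetz-type statement and is exactly the point where characteristic $0$ is used — the paper flags this and defers the cleanest route to Oesterlé's appendix, so in the body I would either cite the Lefschetz property or reduce to the explicit Hilbert series computation of the monomial complete intersection $R/(X_1^d,\dots,X_n^d,(X_1+\dots+X_n)^d)$ and appeal to the appendix for its closed form.
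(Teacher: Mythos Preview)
Your proposal is correct and follows essentially the same approach as the paper: reduce to $R/I=A/(\bar f)$ with $A$ a degree-$d$ complete intersection, use the Lefschetz property of $A$ (the paper phrases this via Hard Lefschetz on the de~Rham cohomology of $(\mathbb{P}^{d-1})^n$, you via Stanley's theorem for monomial complete intersections, and both point to Oesterl\'e's appendix) to get $\dim_k(A/(\bar f))_\mu=\max\{0,\dim_k A_\mu-\dim_k A_{\mu-d}\}$ generically, and read off $\fin$ from the anti-symmetry of the coefficients of $(1-t^d)^{n+1}/(1-t)^n$. The lower bound for arbitrary $I$ then follows exactly as you indicate from $\dim_k(A/(\bar f))_\mu\geq\dim_k A_\mu-\dim_k A_{\mu-d}$ together with unimodality and Gorenstein symmetry of $A$.
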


\begin{proof}{\rm [Iarrobino-Stanley; see also the Appendix at the end of this paper]}
By the K\"unneth formula, $X:=({\bf P}^{d-1})^n$ has de Rham cohomology ring	
isomorphic to	
$$
	C={\bf Z}[\om_1,\ldots ,\om_n]/(\om_1^d,\ldots ,\om_n^d),	
$$	
where $\om_i$ is a K\"ahler form on the
$i$-th factor ${\bf P}^{d-1}$ and the cup product in the cohomology ring corresponds to the usual product in $C$.
The form $\om :=\om_1+\cdots +\om_n$ is a K\"ahler form on $X$ and
the Hard Lefschetz Theorem applied to $\om$ shows that for $0\leq \mu <n(d-1)/2$, the multiplication by $\om$ in $C_\mu$
is injective and the multiplication by $\om^{n(d-1)-2\mu}$ induces the Poincar\'e duality from
$C_\mu \otimes_{\bf Z} {\bf Q}$ to $C_{n(d-1)-\mu}\otimes_{\bf Z} {\bf Q}$. It follows that for $\mu \leq \nu$,
multiplication by $\om^{\nu -\mu}$ in $C\otimes_{\bf Z} {\bf Q}$ is injective if
$\dim C_\mu \leq \dim C_\nu$ and onto  if $\dim C_\mu \geq \dim C_\nu$.

This shows that the Hilbert function of $B:={\bf Q}[\om_1,\ldots ,\om_n]/(\om_1^d,\ldots ,\om_n^d,\om^d )$ is $h(i):=\max \{ 0,a_i\}$,
where $a_i$ is given by
$$
	{{(1-t^d)^{n+1}}\over{(1-t)^n}}=\sum_{i=0}^{(n+1)d-n}a_i t^i.
$$
	As $a_i=-a_{(n+1)d-n-i}$, $a_i>0$ if and only if $0<i\leq \left\lfloor {{(n+1)(d-1)}\over{2}}\right\rfloor$, hence $\fin (B)= \left\lfloor {{(n+1)(d-1)}\over{2}}\right\rfloor$. Now notice that if
	the $f_i$'s are general forms of degree $d$, then $n$ of them, say $f_1,\ldots ,f_n$, form a regular sequence. The quotient $A$ by this
	regular sequence has the same Hilbert function as $C$ and therefore the Hilbert function of $R/I$ is bounded below by $h(i):=\max\{ 0,\dim_k A_i-\dim_k A_{i-d}\}$. As this lower bound is reached by $B$, it is reached on a non empty Zariski open subset of the coefficients of $n+1$ forms of degree
	$d$, if the field contains ${\bf Q}$.
\end{proof}

\begin{cor}\label{mu0gen} If $I$ is $\im$-primary, then the threshold degree satisfies the inequalities
	$$\left\lfloor {{(n-1)(d-1)}\over{2}}\right\rfloor \leq \mu_0(I) \leq (n-1)(d-1)$$
	and equality on the left holds if the forms $f_0,\ldots ,f_n$ are sufficiently general, and $k$ has characteristic $0$.
\end{cor}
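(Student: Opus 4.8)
The plan is to deduce everything from the identity, recalled at the beginning of this section, that in the $\im$-primary case
$$\mu_0(I) = \fin(H^0_\im(R/I)) - d + 1.$$
Since $\dim R/I = 0$, the module $R/I$ has finite length and therefore coincides with its own $\im$-torsion, so this identity reads $\mu_0(I) = \fin(R/I) - (d-1)$. The corollary then becomes a matter of transporting the bounds on $\fin(R/I)$ supplied by the preceding Proposition across this translation.

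First I would feed in the inequality of the Proposition, $\fin(R/I) \ge \lfloor (n+1)(d-1)/2 \rfloor$. Subtracting the integer $d-1$ (which passes freely in and out of the floor) gives
$$\mu_0(I) \ge \left\lfloor \frac{(n+1)(d-1)}{2} \right\rfloor - (d-1) = \left\lfloor \frac{(n+1)(d-1) - 2(d-1)}{2} \right\rfloor = \left\lfloor \frac{(n-1)(d-1)}{2} \right\rfloor,$$
which is the left-hand bound. When the $f_i$ are sufficiently general and $k$ has characteristic zero, the Proposition gives $\fin(R/I) = \lfloor (n+1)(d-1)/2 \rfloor$, and the same computation upgrades the inequality to the equality $\mu_0(I) = \lfloor (n-1)(d-1)/2 \rfloor$. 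For the right-hand bound $\mu_0(I) \le (n-1)(d-1)$ I would simply cite the observation made at the opening of Section \ref{sec:mu0:m-p}; alternatively one can note that $\indeg(H_1) \ge d$, since $Z_1 \subseteq K_1$ lives in degrees $\ge d$ while $(K_1)_d \cong k^{n+1}$ carries no nonzero cycle because the $f_i$ are $k$-linearly independent, and then read off $\mu_0(I) = n(d-1) - \indeg(H_1) + 1 \le (n-1)(d-1)$.

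I do not expect any genuine obstacle here: the Proposition carries all the weight, and the only point requiring a modicum of care is the manipulation of the floor function, which is innocuous because $d-1$ is an integer and $(n+1)(d-1) - 2(d-1) = (n-1)(d-1)$. The one modeling choice worth flagging is the passage $H^0_\im(R/I) = R/I$, valid precisely because we are in the $\im$-primary (finite length) case; this is exactly where the hypothesis $\dim R/I = 0$ enters.
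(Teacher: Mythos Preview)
Your proof is correct and follows essentially the same route as the paper. Both arguments rest on the preceding Proposition and on the identity $\mu_0(I)=\fin(R/I)-(d-1)$ valid in the $\im$-primary case; the only difference is that the paper phrases the computation via the equivalent formula $\mu_0(I)=(n-1)(d-1)-\indeg(H_1)+d$ and invokes Koszul duality to pass from $\indeg(H_1)$ to $\fin(H_0)=\fin(R/I)$, whereas you use the $\fin(R/I)$ formulation directly and thus skip that detour.
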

\begin{proof} If the forms $f_0,\ldots ,f_n$ are sufficiently general, then the ideal $I$ is $\im$-primary and it follows that $H^0_\im(H_i)=H_i$, $i=0,1$. Therefore
	\begin{multline*}
		\mu_0(I)=(n-1)(d-1) - \indeg(H_1) + d = \\
		(n-1)(d-1)-(n+1)d+n+ \left\lfloor {{(n+1)(d-1)}\over{2}}\right\rfloor + d
	\end{multline*}
	where the second equality follows by Koszul duality, and the stated formula follows from a straightforward computation. 	
\end{proof}

\subsection{Free resolutions}

Let $M_\mu$ denote the matrix of the presentation map  in Corollary~\ref{cor:genres}
$$(\Z_1)_\mu
\oplus_{\ell=2}^{n}(J\lr{\ell}/J\lr{\ell -1})_{\mu} \ra
(\Z_0)_\mu=R_\mu\otimes_k R,$$
with respect to a fixed, but otherwise arbitrary, basis.

The following result gives the degree of the syzygies that appears in this matrix. This degree depends on the choice of the integer $\mu$.

\begin{prop}\label{prop2} Assume that $I$ is $\im$-primary and let $\mu$ denote a threshold integer.
Suppose that $\ell$ is an integer such that some syzygy of the power $I^\ell$ appears in the matrix $M_\mu$ as defined above.
Then
	$$\ell\leq \left\lceil {{\indeg (H_1)}\over{d}}\right\rceil\leq \left\lceil {{n+1}\over{2}}\right\rceil.$$
\end{prop}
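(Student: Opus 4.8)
The plan is to recognize the hypothesis as the nonvanishing of one of the free $R'$-modules produced by Theorem~\ref{thm:Fl/Fl-1free}, and then to read the constraint on $\ell$ directly off the rank formula there. By the proof of Corollary~\ref{cor:genres}, the columns of $M_\mu$ are, in the chosen basis, a basis of $(\Z_1)_\mu$ (the linear syzygies of $\mathbf{f}$) together with, for each $2\leq\ell\leq n$, a generating set of $(J\lr{\ell}/J\lr{\ell -1})_\mu$; a nonzero element of the latter lifts to a relation $\sum_{|\alpha|=\ell}c_\alpha\mathbf{f}^\alpha=0$ with $c_\alpha\in R_\mu$, i.e.\ to a syzygy of $I^\ell$ not coming from a lower $T$-degree. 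Hence, for $\ell\geq 2$, a syzygy of $I^\ell$ occurs in $M_\mu$ precisely when $(J\lr{\ell}/J\lr{\ell -1})_\mu\neq 0$; and for $\ell=1$ there is nothing to prove since $H_1$ is a subquotient of $K_1=R(-d)^{n+1}$, so $\indeg(H_1)\geq d$ and $\lceil\indeg(H_1)/d\rceil\geq 1$.

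So fix $\ell\geq 2$ with $(J\lr{\ell}/J\lr{\ell -1})_\mu\neq 0$. Since $I$ is $\im$-primary we have $I^{\rm sat}=R$ and $H^0_\im(H_1)=H_1$, so the rank formula of Theorem~\ref{thm:Fl/Fl-1free} reduces for every $\ell\geq 2$ to $\dim_k(H_1)_{\mu+\ell d}$; thus $(H_1)_{\mu+\ell d}\neq 0$. On the other hand, the isomorphism $H_1\simeq {H_0}^*[n-(n+1)d]$—valid in the $\im$-primary case by \eqref{eq:isoH2Z2} and \eqref{eq:HO*-H2Z2} since $H_2=0$—gives $\dim_k(H_1)_j=\dim_k(R/I)_{(n+1)d-n-j}$, whence $\fin(H_1)=(n+1)d-n$. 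Therefore $\mu+\ell d\leq(n+1)d-n$, and combining this with the threshold hypothesis written as $\mu\geq\mu_0(I)=n(d-1)-\indeg(H_1)+1$ gives $\ell d\leq d+\indeg(H_1)-1$, that is $(\ell-1)d<\indeg(H_1)$. As $\ell-1$ is a nonnegative integer, this forces $\ell-1\leq\lceil\indeg(H_1)/d\rceil-1$, which is the first asserted inequality.

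For the second inequality I would invoke the lower bound $\fin(R/I)\geq\lfloor(n+1)(d-1)/2\rfloor$ established above (equivalently, the left-hand inequality of Corollary~\ref{mu0gen}): since $\indeg(H_1)=(n+1)d-n-\fin(R/I)$ and $(n+1)d-n=(n+1)(d-1)+1$, we get $\indeg(H_1)\leq\lceil(n+1)(d-1)/2\rceil+1$, and then, using $\lceil(n+1)(d-1)/2\rceil\leq(d-1)\lceil(n+1)/2\rceil$ together with $1\leq\lceil(n+1)/2\rceil$, that $\indeg(H_1)\leq d\,\lceil(n+1)/2\rceil$, hence $\lceil\indeg(H_1)/d\rceil\leq\lceil(n+1)/2\rceil$. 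The points that need care are the identification of the columns of $M_\mu$ with syzygies of the various powers $I^\ell$, the use of $I^{\rm sat}=R$ to discard the second summand in the rank formula of Theorem~\ref{thm:Fl/Fl-1free}, and the floor/ceiling manipulations (which hold for all $n\geq 2$ and $d\geq 2$); none of these is a genuine obstacle, so the real content is just the combination of $\mu\geq\mu_0(I)$ with $\mu+\ell d\leq\fin(H_1)$.
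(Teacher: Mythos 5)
Your argument is correct and follows essentially the same path as the paper's: identify nonvanishing of $(J\lr{\ell}/J\lr{\ell-1})_\mu$ with $(H_1)_{\mu+\ell d}\neq 0$ via Theorem~\ref{thm:Fl/Fl-1free}, combine $\mu\geq\mu_0(I)=n(d-1)-\indeg(H_1)+1$ with $\mu+\ell d\leq\fin(H_1)=(n+1)d-n$ to get the first bound, and then invoke the Iarrobino--Stanley lower bound on $\fin(R/I)$ (equivalently, the left inequality in Corollary~\ref{mu0gen}) for the second. Your floor/ceiling bookkeeping is in fact slightly more careful than the paper's, which asserts the strict inequality $\indeg(H_1)<(n+1)d/2$ that can fail to be strict for $n=2$ and $d$ even, though the final bound $\lceil\indeg(H_1)/d\rceil\leq\lceil(n+1)/2\rceil$ still holds in that case.
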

\begin{proof} By Theorem \ref{thm:Fl/Fl-1free}, if $\mu +(\ell +1)d>\fin (H_1)$, then $M_\mu$ involves syzygies of $I^j$ with $1\leq j\leq\ell$ for $\mu\geq \mu_0$. Now $\fin (H_1)=(n+1)d-n$ and the equation
$$
(n-1)(d-1)-\indeg (H_1)+d+(\ell +1)d\geq (n+1)d-n+1
$$
can be rewritten $\ell d\geq \indeg (H_1)$.
By Corollary \ref{mu0gen} it obtains
$$\indeg(H_1)\leq (n-1)(d-1)+d-\left\lfloor {{(n-1)(d-1)}\over{2}}\right\rfloor <{{(n+1)d}\over{2}},$$
which shows the second estimate.
\end{proof}

The above proposition also shows that one can tune a threshold integer $\mu$ in order to bound the degree
of the syzygies of $I$ that may appear in the matrix $M_\mu$. More precisely, choose an integer $l\in \{1,\ldots, \left\lceil {{n+1}\over{2}}\right\rceil\}$.
Then, $M_\mu$ involves only syzygies of $I$ of degree at most $l$ for every
$$\mu \geq \max\{ (n-1)(d-1)-(l-1)d,  \mu_0(I) \} = \max\{ (n-l)(d-1)-(l-1),  \mu_0(I) \}$$
For instance, $M_\mu$ involves only linear syzygies of $I$ (i.e.~$l=1$) for every  $\mu\geq (n-1)(d-1)$, and it involves only linear and
quadratic syzygies of $I$ for every  $\mu \geq \max\{(n-2)(d-1)-1,\mu_0(I)\}$, and so forth.

 \begin{cor}\label{cor:resolution} If the forms $f_0,\ldots ,f_n$ define an $\im$-primary ideal then for any threshold integer $\mu $
	the graded $R'$-module  $(\cs_I^*)_\mu$ has a minimal graded free $R'$-resolution of the form
\begin{equation}\label{eq:resolutionSI*-gen}
	\cdots \ra R'\{ -i \}^{b_i} \ra \cdots \ra R'\{ -2 \}^{b_2} \ra R'\{ -1 \}^{b_1}
	\oplus_{\ell=1}^{\left\lceil {{n+1}\over{2}}\right\rceil }R'\{ -\ell \}^{\b_\ell} \ra
	{R'}^{{\mu +n-1}\choose{n-1}}
	% \ra (\cs_I ^*)_\mu\ra 0	
\end{equation}
	with $\b_\ell :=\dim_k (H_0)_{(n+1-\ell )d-n-\mu  }$ and
	$$b_i:=\dim_k (B_i)_{\mu +id}= \sum_{k=1}^{\min\{ n-i,\left\lfloor {{\mu }\over{d}}\right\rfloor \}} (-1)^{k+1}
	{{\mu -kd+n-1}\choose{n-1}}^{{n}\choose{i+k}}$$
	
\end{cor}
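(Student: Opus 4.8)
The plan is to specialise the minimal graded free resolution furnished by Corollary~\ref{cor:genres} to the $\im$-primary situation and to identify the ranks and twists of each of its free modules; since that resolution is already known to be minimal, only a bookkeeping of ranks and degrees is needed. Recall that for a threshold integer $\mu$ Corollary~\ref{cor:genres} gives a minimal graded free $R'$-resolution
$$\cdots \ra (\Z_i)_\mu \ra \cdots \ra (\Z_2)_\mu \ra (\Z_1)_\mu \oplus \bigoplus_{\ell=2}^{n}(J\lr{\ell}/J\lr{\ell-1})_\mu \ra (\Z_0)_\mu$$
of $(\cs_I^*)_\mu$, that $(\Z_i)_\mu=(Z_i)_{\mu+id}\otimes_k R'\{-i\}$ is a free $R'$-module generated in the single internal degree $i$, and that $(\Z_0)_\mu=R_\mu\otimes_k R'\cong {R'}^{\binom{\mu+n-1}{n-1}}$ because $\dim_k R_\mu=\binom{\mu+n-1}{n-1}$. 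In the $\im$-primary case $\cs_I^*=\rees_R(I)$, so this will be the asserted resolution once each term is made explicit.

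First I would use the $\im$-primary hypothesis to rewrite each term. Since $I$ has grade $n$ (being $\im$-primary in the $n$-dimensional regular ring $R$) and is generated by the $n+1$ forms $\mathbf f$, one has $H_i(\mathbf f;R)=0$ for $i\geq 2$; hence $Z_i=B_i$ for $i\geq 2$ and $(\Z_i)_\mu\cong R'\{-i\}^{b_i}$ with $b_i=\dim_k(B_i)_{\mu+id}$, $i\geq 2$. Also $I^{\rm sat}=R$, so the summand $\dim_k(R/I^{\rm sat})_{(n+1-\ell)d-n-\mu}$ in Theorem~\ref{thm:Fl/Fl-1free}(ii) is zero for every $\ell$, while $H^0_\im(H_1)=H_1$ because $H_1(\mathbf f;R)$ is an Artinian $R/I$-module; thus $(J\lr{\ell}/J\lr{\ell-1})_\mu$ is a free $R'$-module, generated in internal degree $\ell$, of rank $\dim_k(H_1)_{\mu+\ell d}$ for every $\ell\geq 2$. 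I would then invoke \eqref{eq:isoH2Z2}, which in the $\im$-primary case (where $H^0_\im(H_0)=H_0=R/I$ and $H^0_\im(H_1)=H_1$) reads $H_1\cong(R/I)^*[n-(n+1)d]$, so that $\dim_k(H_1)_j=\dim_k(R/I)_{(n+1)d-n-j}$. Taking $j=\mu+\ell d$ identifies the rank of $(J\lr{\ell}/J\lr{\ell-1})_\mu$ with $\b_\ell=\dim_k(H_0)_{(n+1-\ell)d-n-\mu}$, and taking $j=\mu+d$ gives $\dim_k(Z_1)_{\mu+d}=\dim_k(B_1)_{\mu+d}+\dim_k(H_1)_{\mu+d}=b_1+\b_1$, so $(\Z_1)_\mu\cong R'\{-1\}^{b_1}\oplus R'\{-1\}^{\b_1}$. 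Collecting homological degree $1$ then produces $R'\{-1\}^{b_1}\oplus\bigoplus_{\ell=1}^{n}R'\{-\ell\}^{\b_\ell}$.

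There remain two points: truncating the last sum at $\lceil(n+1)/2\rceil$ and expressing $b_i$ in binomials. For the truncation I would quote Proposition~\ref{prop2}: in the minimal resolution the columns of $M_\mu$ contributed by $(J\lr{\ell}/J\lr{\ell-1})_\mu$ are exactly the syzygies of $I^\ell$ appearing in the presentation, so that proposition forces $(J\lr{\ell}/J\lr{\ell-1})_\mu=0$, hence $\b_\ell=0$, whenever $\ell>\lceil\indeg(H_1)/d\rceil$, and its proof (via Corollary~\ref{mu0gen}) shows $\lceil\indeg(H_1)/d\rceil\leq\lceil(n+1)/2\rceil$. For $b_i=\dim_k(B_i)_{\mu+id}$ I would use that, $H_j(\mathbf f;R)$ being zero for $j\geq 2$, the tail $0\to K_{n+1}\to\cdots\to K_{i+1}\to B_i\to 0$ of the Koszul complex is exact for $i\geq 1$; taking alternating sums of Hilbert functions and substituting $\dim_k R_{\mu-kd}=\binom{\mu-kd+n-1}{n-1}$ (which vanishes for $k>\mu/d$) yields the displayed expression for $b_i$. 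The only step that is not pure bookkeeping is Theorem~\ref{thm:Fl/Fl-1free} itself — notably the freeness of $(J\lr{\ell}/J\lr{\ell-1})_\mu$ and the value of its rank — but this is already in hand; the one point requiring care afterwards is the reindexing that moves $\b_1$ copies of $R'\{-1\}$ out of $(\Z_1)_\mu$ and into the second summand of homological degree $1$.
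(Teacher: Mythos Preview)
Your proof is correct and follows the same route as the paper: specialise Corollary~\ref{cor:genres} using that $H_i=0$ for $i\geq 2$ in the $\im$-primary case, so that $Z_i=B_i$ for $i\geq 2$ and the tail of the $\Z$-complex coincides with the $\B$-complex. The paper's own proof is a one-liner to this effect; you have simply spelled out the bookkeeping it leaves implicit --- the Koszul-duality identification of the ranks $\b_\ell$, the splitting of $(\Z_1)_\mu$ into $b_1+\b_1$ copies of $R'\{-1\}$, the truncation of the $\ell$-sum via Proposition~\ref{prop2}, and the alternating-sum computation of $b_i$ from the exact Koszul tail.
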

\begin{proof}
	$\ks$ is resolved by the back part of the
	 $\Z$-complex and, as $H_i=0$ for $i\geq 2$, this coincides
	 with the corresponding part of the $\B$-complex.
\end{proof}

It is interesting to describe explicitly the maps involved in the resolution \eqref{eq:resolutionSI*-gen}.
\begin{itemize}
	\item The map $R'\{ -i \}^{b_i} \ra R'\{ -i+1 \}^{b_{i-1}}$ is the degree $\mu$ part of the $i$-th map in the $\B$-complex.
	It is given by $(B_i)_{\mu +id}\otimes_k R'\{ -i\}{\buildrel{d_i^{T}}\over{\lra}}(B_{i-1})_{\mu +(i-1)d}\otimes_k R'\{ -i+1\}$ (composed with the inclusion $(B_0)_\mu \otimes_k R' =I_\mu \otimes_k R'\subset R_\mu \otimes_k R'$ for $i=1$).
	\item The matrix of the map $R'\{ -1\}^{\b_1 } \ra {R'}^{{\mu +n-1}\choose{n-1}}=R_\mu \otimes_k R'$ is given by pre-images in $S_{\mu ,1}$ of a
	basis over $k$ of $(H_1)_{\mu +d}$ (the syzygies $\sum_i a_i T_i$ with $a_i$ of degree $\mu$, modulo the Koszul syzygies).
	\item The matrix of the map $R'\{ -\ell \}^{\b_\ell } \ra {R'}^{{\mu +n-1}\choose{n-1}}=R_\mu \otimes_k R'$ for $\ell \geq 2$ is given by pre-images in $S_{\mu ,\ell }$ of a
	basis of $(J\lr{\ell}/J\lr{\ell -1})_{\mu ,\ell}$ over $k$
\end{itemize}

\subsection{Downgrading maps}\label{downgrading}

		Assuming that $I$ is $\im$-primary (notice that in this case $J=(J\lr{1}:\im^\infty)$), the homology
modules $H_0$ and $H_1$ are supported on $V(I)=V(\im)$ so that \eqref{eq:HO*-H2Z2} shows that
		$${H_0}^*[n-(n+1)d]\simeq  {H^0_\im(H_0)}^*[n-(n+1)d] \simeq H^2_\im(Z_2)\simeq H^0_\im(H_1)=H_1$$
		Therefore, \eqref{eq:F2} and \eqref{eq:Fl/Fl-1} imply that for every  $\ell\geq 2$ and every  threshold
integer $\mu $ we have a graded isomorphism of $R'$-modules
		\begin{equation}\label{eq:JlJl-1=H1}
			(J\lr{\ell}/J\lr{\ell -1})_{\mu }\simeq (H_1)_{\mu +\ell d}\otimes_k R'\{-\ell\}
		\end{equation}
		The purpose of this section is to show that \eqref{eq:JlJl-1=H1} is realized by a
\emph{downgrading map} -- versions of such maps
have been considered in \cite{HSV09} and even earlier in a slightly different form (\cite{HeSV83}).

Namely,  define the map $\delta : S/\ks \rightarrow  (S/\ks )[d]\{-1\}$ by
		\begin{eqnarray*}
			\delta_p : (S/\ks )_{p} & \rightarrow & (S/\ks )_{p-1}[d] \\
			\sum_{0\leq i_1,\ldots,i_p\leq n} c_{i_1,\ldots,i_p}T_{i_1}\ldots T_{i_p} & \mapsto &
			\sum_{0\leq i_1,\ldots,i_p\leq n} c_{i_1,\ldots,i_p}f_{i_1}T_{i_2}\ldots T_{i_p}			
		\end{eqnarray*}
Note that this map is well defined. In addition, it induces for any integer $\mu\geq 0$ a (well-defined) homogeneous map of
graded $R'$-modules
		$$
		{\lambda_2^\mu}  : (J\lr{2}/J\lr{1})_\mu \rightarrow (J\lr{1}/\ks)\{-1\}_{\mu+d},
		$$
		and, for any integers $\mu \geq 0$ and $p\geq 3$, a (well-defined) homogeneous map of
graded $R'$-modules
		$${\lambda_p^\mu}:
		(J\lr{p}/J\lr{p-1})_\mu \rightarrow (J\lr{p-1}/J\lr{p-2})\{-1\}_{\mu+d} .
		$$

		\begin{lem}\label{lem:lambda2iso}
			The map ${\lambda_2^\mu}$ is injective for every $\mu \geq 0$ and is
			surjective for every  threshold integer $\mu$.
		\end{lem}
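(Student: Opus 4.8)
The plan is to analyze the map $\lambda_2^\mu$ by factoring it through the downgrading map $\delta$ and relating its source and target to Koszul-theoretic data already computed in Theorem~\ref{thm:Fl/Fl-1free}. First I would make the identifications explicit: by the theorem and the isomorphism \eqref{eq:JlJl-1=H1}, the source $(J\lr{2}/J\lr{1})_\mu$ is a free $R'$-module with $(J\lr{2}/J\lr{1})_{\mu,2}\simeq (H_1)_{\mu+2d}$, generated in $T$-degree $2$; and the target $(J\lr{1}/\ks)\{-1\}_{\mu+d}$ is, via the identification $J\lr{1}\simeq\syz_R(\mathbf{f})S$ and the fact that $\ks$ corresponds to the Koszul syzygies, isomorphic to $H_1\otimes_k R'\{-1\}$ placed in the appropriate degree, i.e. its degree-$(\mu+d)$, $T$-degree-$1$ part is $(H_1)_{\mu+d}\otimes\langle T\rangle$.

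The key computation is then to chase through $\delta$ what $\lambda_2^\mu$ does on generators. An element of $(J\lr{2}/J\lr{1})_{\mu,2}$ is represented by a quadratic form $\sum_{i\le j} c_{ij}T_iT_j$ with $c_{ij}\in R_\mu$ lying in $J$ modulo $J\lr{1}$; its image under $\delta$ is $\sum c_{ij} f_i T_j$ (symmetrized), which lands in $J\lr{1}/\ks$. The content of the statement is that this assignment, read off in terms of the Koszul cycle corresponding to $\sum c_{ij}T_iT_j$, is precisely an isomorphism $(H_1)_{\mu+2d}\xrightarrow{\sim}(H_1)_{\mu+d}$ after the degree shift — more precisely that it coincides, up to the identifications above, with the identity map on $H_1$ (which is annihilated by $I$, so there is no actual shift issue at the level of the module $H_1$ itself, only at the level of the grading of the free $R'$-modules). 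Injectivity for all $\mu\ge 0$ should follow because $\delta$ on $\ol{S}=S/\ks$ has a transparent kernel description (a quadratic monomial $T_iT_j$ maps to $f_iT_j\equiv f_jT_i$, and the relation $f_iT_j-f_jT_i=0$ in $\ol{S}$ forces the kernel to consist exactly of classes already in $J\lr{1}$), so that $\delta$ descends to an \emph{injection} on $J\lr{2}/J\lr{1}$; this is essentially a restatement of the well-definedness claims preceding the lemma, made sharp.

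For surjectivity I would argue by a rank/dimension count. For a threshold integer $\mu$, Theorem~\ref{thm:Fl/Fl-1free}(ii) with $\ell=2$ gives that $(J\lr{2}/J\lr{1})_\mu$ is free of rank $\dim_k(H^0_\im(H_1))_{\mu+2d}=\dim_k(H_1)_{\mu+2d}$ (using $H^0_\im(H_1)=H_1$ in the $\im$-primary case). On the target side, $(J\lr{1}/\ks)_{\mu+d}$ in $T$-degree $1$ has dimension $\dim_k(\syz_R(\mathbf{f})/\text{Koszul})_{\mu+d}=\dim_k(H_1)_{\mu+d}$ as a $k$-vector space in that bidegree, but we must compare the full $R'$-module structures. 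Since both modules are free over $R'$ and generated in a single $T$-degree ($2$ for the source after the shift $\{-\}$, matched up in the target by the $\{-1\}$ shift), it suffices to check surjectivity of the $k$-linear map between the generating bidegrees, i.e. that $\delta$ induces a surjection $(H_1)_{\mu+2d}\surj(H_1)_{\mu+d}$ in the relevant sense — and since, as noted, this induced map is (up to the identifications) the identity on $H_1$ and the graded pieces have equal dimension once we account for the fact that $\cdot f$ acts as zero on $H_1$ so the two ``copies'' $(H_1)_{\mu+2d}$ and $(H_1)_{\mu+d}$ are literally the same data shifted, bijectivity is forced. The main obstacle I expect is bookkeeping: getting the three or four grading/shift conventions ($[d]$, $\{-1\}$, $\{-2\}$, and the $R$-degree $\mu$ versus $T$-degree $\ell$ bigrading) to line up consistently so that "$\delta$ induces the identity on $H_1$" is a precise and correct statement, rather than an off-by-$d$ artifact; the actual algebra is light once the identifications in Theorem~\ref{thm:Fl/Fl-1free} and equation \eqref{eq:JlJl-1=H1} are invoked.
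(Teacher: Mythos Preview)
Your surjectivity strategy is the same as the paper's: once injectivity is known, both source and target are free $R'$-modules generated in a single $T$-degree with the same rank (via \eqref{eq:JlJl-1=H1} and the identification $(J\lr{1}/\ks)_\nu\simeq (H_1)_{\nu+d}\otimes_k R'\{-1\}$), so a graded injective map between them is an isomorphism. One bookkeeping slip: the target in the generating bidegree is $(H_1)_{\mu+2d}$, not $(H_1)_{\mu+d}$, since a linear syzygy with coefficients in $R_{\mu+d}$ sits in $(Z_1)_{\mu+2d}$. This makes the ranks match on the nose and renders your ``$\delta$ induces the identity on $H_1$'' heuristic unnecessary (and in fact that heuristic is not correct as stated; the identification of the source with $(H_1)_{\mu+2d}$ comes from the spectral sequence in Theorem~\ref{thm:Fl/Fl-1free}, not from any obvious map, so there is no reason the composite should be the identity).

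The real gap is in your injectivity argument. You assert that the kernel of $\delta_2$ on $\ol S$ ``consists exactly of classes already in $J\lr{1}$'' and that this is ``essentially a restatement of the well-definedness claims, made sharp.'' But well-definedness of $\delta$ modulo $\ks$ only uses $f_iT_j\equiv f_jT_i$ in $\ol S$ in the \emph{forward} direction; injectivity requires the converse implication, namely: if $\sum c_{ij}f_iT_j\in\ks$ then $\sum c_{ij}T_iT_j\in J\lr{1}$. This is not automatic. The paper supplies the missing step: since $\ks$ in $T$-degree~$1$ is exactly the span of the Koszul relations, $\sum c_{ij}f_iT_j\in\ks$ means there is a \emph{skew-symmetric} matrix $(a_{ij})$ with $\sum_i c_{ij}f_i=\sum_i a_{ij}f_i$ for every $j$. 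Then each column $(c_{ij}-a_{ij})_i$ is a syzygy of $\mathbf f$, so $\sum_i(c_{ij}-a_{ij})T_i\in J\lr{1}$, whence $\sum_{i,j}(c_{ij}-a_{ij})T_iT_j\in J\lr{1}$; and $\sum_{i,j}a_{ij}T_iT_j=0$ by skew-symmetry. This is the actual content of the lemma, and your sketch skips it.
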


		\begin{proof} By definition, $(J\lr{1}/\ks)_{\nu}\simeq (H_1)_{\nu+d}\otimes_k R'\{-1\}$ for every integer $\nu\geq 0$.
Therefore, since ${\lambda_2^\mu}$ is a graded map and since \eqref{eq:JlJl-1=H1} is a graded isomorphism,
it suffices to show that ${\lambda_2^\mu}$ is injective for every  $\mu \geq 0$.

			Let $\a :=\sum_{0\leq i,j \leq n} c_{i,j}T_{i}T_{j} \in \ker (\lambda_2^\mu)$. By a standard property of Koszul syzygies,
there exists a skew-symmetric matrix $(a_{i,j})_{0\leq i,j \leq n}$ with entries in $R_\mu$ such that
			$$\sum_{0\leq i,j \leq n} c_{i,j}f_{i}T_{j}=\sum_{0\leq i,j \leq n} a_{i,j}f_{i}T_{j}$$
		in $R_{\mu+d}\otimes_k R'$. It follows that, for every  $j=0,\ldots,n$, $\sum_{0\leq i\leq n} (c_{i,j}-a_{i,j})f_{i}=0$,
i.e.,  $\sum_{0\leq i\leq n} (c_{i,j}-a_{i,j})T_{i}\in J\lr{1}, \ \ j=0,\ldots,n$. Thus
		\begin{equation}\label{eq:(c-a)J1}
			\sum_{0\leq i,j \leq n} (c_{i,j}-a_{i,j})T_{i}T_j\in J\lr{1}, \ \ j=0,\ldots,n.	
		\end{equation}
		Therefore, since $\sum_{0\leq i,j \leq n} a_{i,j}T_{i}T_{j} \in J\lr{1}$, one has $\a \in J\lr{1}$, as was to be shown.
%by \eqref{eq:(c-a)J1}.
		\end{proof}

\begin{rem} The above result can also be deduced from \cite[Lemma 2.11]{HSV09} and \cite[Theorem 2.14]{HSV09}.	
\end{rem}

		\begin{prop}\label{prop:downgrading}
			For every integer $p\geq 2$ and every threshold integer $\mu$ the map ${\lambda_p^\mu}$ is an isomorphism.
		\end{prop}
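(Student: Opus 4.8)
The plan is to reduce the statement to Lemma~\ref{lem:lambda2iso} by an inductive argument on $p$, together with a compatibility check showing that the downgrading maps $\lambda_p^\mu$ assemble the filtration $(F_\ell)$ coming from the spectral sequence in the proof of Theorem~\ref{thm:Fl/Fl-1free}. First I would record that, by the very definition of $\delta$, the maps $\lambda_p^\mu$ are compatible with the surjections $J\lr{p}\surj J\lr{p}/J\lr{p-1}$ in the sense that multiplication of one $T$-variable by the corresponding $f$ carries a representative of a class in $(J\lr{p}/J\lr{p-1})_\mu$ to a representative of a class in $(J\lr{p-1}/J\lr{p-2})\{-1\}_{\mu+d}$; the content of Lemma~\ref{lem:lambda2iso} is precisely that this is injective for all $\mu\ge 0$ and bijective in the threshold range when $p=2$. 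The goal is to propagate bijectivity upward.

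The key step is injectivity of $\lambda_p^\mu$ for every $p\ge 2$ and every $\mu\ge 0$ (not just threshold $\mu$). I would prove this by induction on $p$, the base case $p=2$ being Lemma~\ref{lem:lambda2iso}. For the inductive step, take $\a=\sum c_{i_1,\dots,i_p}T_{i_1}\cdots T_{i_p}$ representing a class in $\ker(\lambda_p^\mu)$, so that $\delta(\a)=\sum c_{i_1,\dots,i_p}f_{i_1}T_{i_2}\cdots T_{i_p}$ lies in $J\lr{p-2}$. Write $\delta(\a)=\sum_{i_1} f_{i_1}\bigl(\sum c_{i_1,\dots,i_p}T_{i_2}\cdots T_{i_p}\bigr)$; grouping by the leading variable and using that the inner sums are degree-$(p-1)$ forms in the $T$'s, one wants to conclude that $\a$ itself already lies in $J\lr{p-1}$. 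As in the proof of Lemma~\ref{lem:lambda2iso}, the mechanism is to subtract off a Koszul-type correction: the fact that $\delta(\a)\in J\lr{p-2}\subset J\lr{p-1}$ forces the "transpose" combinations $\sum_{i_1}(c_{i_1,\dots,i_p}-a_{i_1,\dots,i_p})T_{i_1}$ to lie in $J\lr{1}$ for a suitable skew-symmetric (in the appropriate pair of indices) matrix of coefficients, whence $\a\in J\lr{p-1}$ after accounting for the $\ks$-relations. This is the step I expect to be the main obstacle: bookkeeping the multilinear indices and verifying that the "standard property of Koszul syzygies" invoked for $p=2$ upgrades cleanly to the symmetric-power setting, i.e.\ that the ambiguity in representing a degree-$p$ element of $J$ modulo $J\lr{p-1}$ is exactly captured by Koszul syzygies in one slot. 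One should be able to do this by applying the $p=2$ statement slotwise, or by descending induction.

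Once injectivity of all $\lambda_p^\mu$ is in hand, surjectivity for threshold $\mu$ is forced by dimension count. By Theorem~\ref{thm:Fl/Fl-1free} (case $\ell=p$, in the $\im$-primary situation where $R/I^{\mathrm{sat}}=0$ so only the $H_1$-term survives) the source $(J\lr{p}/J\lr{p-1})_\mu$ is a free $R'$-module of rank $\dim_k (H_1)_{\mu+pd}$, and by the $\ell=p-1$ case the target $(J\lr{p-1}/J\lr{p-2})\{-1\}_{\mu+d}$ is free of rank $\dim_k (H_1)_{(\mu+d)+(p-1)d}=\dim_k(H_1)_{\mu+pd}$ — the same number (this is exactly the content of the isomorphism \eqref{eq:JlJl-1=H1}). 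An injective graded map between finite free $R'$-modules of equal rank whose cokernel is also finite-dimensional-per-degree — indeed here one checks it is generated in a single degree on both sides, degree $p$ resp.\ degree $p-1$ shifted — must be surjective; concretely, $\lambda_p^\mu$ induces an injection on the degree-$p$ graded pieces over $k$ between spaces of equal dimension $\dim_k(H_1)_{\mu+pd}$, hence an isomorphism there, and since both modules are generated in that single degree over $R'$, $\lambda_p^\mu$ is an isomorphism. Thus for $p\ge 2$ and $\mu\ge\mu_0(I)$ the map $\lambda_p^\mu$ is bijective, which is the assertion. A final remark would point out that, combining the $\lambda_p^\mu$ for $2\le p\le n$, one recovers \eqref{eq:JlJl-1=H1} as the composite $\lambda_2^{\mu+( p-2)d}\circ\cdots\circ\lambda_p^\mu$ up to the stated shifts, so that all higher graded pieces $J\lr{p}/J\lr{p-1}$ are genuinely obtained from the linear syzygies $H_1=J\lr{1}/\ks$ by iterated downgrading.
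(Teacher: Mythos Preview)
Your overall architecture --- induct on $p$ for injectivity with Lemma~\ref{lem:lambda2iso} as the base case, then obtain surjectivity from the equality of ranks coming from \eqref{eq:JlJl-1=H1} --- matches the paper exactly, and your rank/generation argument for surjectivity is fine. The gap is in the inductive step for injectivity.

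You propose to prove injectivity of $\lambda_p^\mu$ for \emph{all} $\mu\ge 0$ by a direct Koszul-correction argument imitating the $p=2$ case. But for $p\ge 3$ the hypothesis $\lambda_p^\mu(\bar\alpha)=0$ only says $\delta(\alpha)\in J\lr{p-2}$, not $\delta(\alpha)\in\ks$; there is no skew matrix to subtract off, and your assertion that the ``transpose combinations'' land in $J\lr{1}$ does not follow from $\delta(\alpha)\in J\lr{p-2}$. Worse, your sketch never actually uses the inductive hypothesis on $\lambda_{p-1}$, so it is not an induction. The paper's inductive step is structured differently and \emph{requires} the threshold hypothesis: one writes $\alpha=\sum_i B_iT_i$ with $B_i$ of $T$-degree $p-1$, argues (for $p>3$ using that $(J\lr{p-2}/J\lr{p-3})_{\mu+d}$ is free and generated in $T$-degree $p-2$, which is Theorem~\ref{thm:Fl/Fl-1free} and needs a threshold degree) that each $\delta(B_i)$ lies in $J\lr{p-2}$, and then invokes the inductive hypothesis that $\lambda_{p-1}^{\mu}$ is an \emph{isomorphism} --- in particular \emph{surjective} --- to lift $\delta(B_i)$ back and conclude $B_i\in J\lr{p-1}$. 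The point you are missing is that the induction is on the full isomorphism statement: surjectivity of $\lambda_{p-1}^{\mu}$ is precisely what drives injectivity of $\lambda_p^{\mu}$, and that surjectivity is only available for threshold $\mu$. Your plan to decouple injectivity from surjectivity and prove the former for all $\mu\ge 0$ does not go through as written.
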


		\begin{proof}
			By the same token, according to \eqref{eq:JlJl-1=H1}, it suffices to show that ${\lambda_p^\mu}$ is injective
for every  $p\geq 2$ and every  $\mu\geq \mu_0(I)$. By Lemma \ref{lem:lambda2iso}, the claim holds for $p=2$.

Now assume that $p\geq 3$.
If $p=3$ pick an element
			$$\a =\sum_{i=0}^n B_iT_i\in (J\lr{3})_{\mu,3}$$
			 such that  $\lambda_3^\mu (\overline{\a} )=0$. It follows that
			 $$\delta_3^\mu ({\a} )=\sum_{i=0}^n \delta_{2}^\mu (B_i)T_i\in (J\lr{1})_{\mu+d,2}$$
			By a similar argument to the one employed in the proof of Lemma \ref{lem:lambda2iso}, it follows that  $\delta_{2}^\mu (B_i)\in (J\lr{1}/\ks)_{\mu+d,1}$ for every  $i=0,\ldots,n$. Therefore, since $\lambda_{2}^\mu$ is an isomorphism, it follows that $B_i \in (J\lr{2}/J\lr{1})_{\mu,2}$ and hence that $\overline{\a}=0$ in $(J\lr{3}/J\lr{2})_{\mu}$.

			Now, we proceed by induction on the integer $p\geq 2$ and assume that $p>3$. Pick an element
			$$\a =\sum_{i=0}^n B_iT_i\in (J\lr{p})_{\mu,p}$$
			 such that $\lambda_p^\mu (\overline{\a} )=0$, it follows that
			 $$\delta_p^\mu ({\a} )=\sum_i \delta_{p-1}^\mu (B_i)T_i\in (J\lr{p-2})_{\mu+d,p-1}$$
		By Theorem \ref{thm:Fl/Fl-1free}, $(J\lr{p-2}/J\lr{p-3})_{\mu +d}$ is a free graded $R'$-module which is generated in degree $p-2$. Therefore, we deduce that $\delta_{p-1}^\mu (B_i)\in (J\lr{p-2}/J\lr{p-3})_{\mu+d,p-2}$ for every  $i=0,\ldots,n$. Since $\lambda_{p-1}^\mu$ is an isomorphism by our inductive hypothesis, it follows that $B_i \in (J\lr{p-1}/J\lr{p-2})_{\mu,p-1}$ and hence that ${\a}=0$ in $(J\lr{p}/J\lr{p-1})_{\mu}$.
		\end{proof}

\section{The one-dimensional case}

In this section, we go back to the general situation where $\dim (R/I)\leq 1$ and will no longer
assume that the ideal $I$ is $\im$-primary. This more general class of ideals have interesting applications
to the implicitization of surfaces in a projective space defined by a parametrization whose base locus is a finite set
of points.

\subsection{Bounds for the threshold degree}\label{sec:mu0}

Recall the definition of the threshold degree:
$$
\mu_0 (I):=(n-1)(d-1)-\min\{ \indeg (H_1),\indeg (H^0_\im (H_1))-d\}.
$$
Following \cite{BuCh05}, one sets
$$\nu_0(I):=(n-1)(d-1) - \indeg(I^{\rm sat})$$

\begin{prop}\label{prop:regmu} Let $\dim (R/I)\leq 1$. If $\nu (I_\ip )\leq \dim R_\ip +1$ for every prime ideal $\ip \supset I$ then
	$$\reg (R/I)-d \leq \mu_0 (I) \leq \nu_0(I) \leq (n-1)(d-1)$$
for $n\geq 3$ and $d\geq 2$.
\end{prop}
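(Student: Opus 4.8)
The plan is to establish the three inequalities separately, reading everything off the definition
$$\mu_0(I)=(n-1)(d-1)-\min\{\indeg(H_1),\indeg(H^0_\im(H_1))-d\}.$$
The rightmost inequality $\nu_0(I)\leq(n-1)(d-1)$ is immediate since $\indeg(I^{\rm sat})\geq 0$ (indeed $\geq 1$ when $I\neq R$, but we only need $\geq 0$). For the leftmost inequality $\reg(R/I)-d\leq\mu_0(I)$, I would use the reformulation given in the excerpt just after the definition of $\mu_0$, namely that under $\dim(R/I)\leq 1$ one has, by Koszul duality,
$$\mu_0(I)=\max\{\fin(H^0_\im(R/I))-d,\ \fin(H^1_\im(H_1))-2d\}+1.$$
Since $\reg(R/I)=\max\{\fin(H^0_\im(R/I)),\ \fin(H^1_\im(R/I))+1\}$ and, by local duality applied in the Koszul complex, $H^1_\im(R/I)$ is related to $H^1_\im(H_1)$ up to the appropriate shift (this is the content of the isomorphism $H^1_\im(H_2)\simeq(R/I^{\rm sat})^*[n-(n+1)d]$ and its companions used in the proof of Theorem~\ref{thm:Fl/Fl-1free}), a bookkeeping of the two contributions to $\reg(R/I)$ against the two contributions to $\mu_0(I)$ gives the bound; the key point is that each local-cohomology module contributing to $\reg(R/I)$ is, after the shift by $d$, dominated by one of the two terms in the $\max$ defining $\mu_0(I)$.

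The substantive inequality is the middle one, $\mu_0(I)\leq\nu_0(I)$, i.e.
$$\indeg(I^{\rm sat})\leq\min\{\indeg(H_1),\indeg(H^0_\im(H_1))-d\}.$$
So I must show both $\indeg(I^{\rm sat})\leq\indeg(H_1)$ and $\indeg(I^{\rm sat})\leq\indeg(H^0_\im(H_1))-d$. For the first, the strategy is to produce, from a nonzero element of $H_1$ in its initial degree, a nonzero element of $(R/I)^{\rm sat}=I^{\rm sat}/I$ (or directly of $I^{\rm sat}$) in degree at most $\indeg(H_1)$: a cycle $\sum a_ie_i$ representing a nonzero class in $(H_1)_\delta$ with $\delta=\indeg(H_1)$ has the property that the ideal $(a_0,\dots,a_n)$, being a nontrivial first syzygy of a sequence with $\dim R/I\leq 1$, forces a socle-type element; more concretely one uses that in this setup $H_1\simeq\omega_{R/J}$-type modules (as in Lemma~\ref{lem:indeg}) whose initial degree is governed by $\indeg(I^{\rm sat})$ via local duality. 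For the second inequality, note $H^0_\im(H_1)\subseteq H_1$ so a priori $\indeg(H^0_\im(H_1))\geq\indeg(H_1)\geq\indeg(I^{\rm sat})$, but we need the stronger statement with the extra $-d$; here I would invoke the isomorphism \eqref{eq:isoH2Z2}, $H^2_\im(Z_2)\simeq H^0_\im(H_1)$, together with the exact sequence \eqref{eq:HO*-H2Z2} and $H^1_\im(H_2)\simeq(R/I^{\rm sat})^*[n-(n+1)d]$ from the proof of Theorem~\ref{thm:Fl/Fl-1free}: these pin down $\indeg(H^0_\im(H_1))$ in terms of $\fin(R/I^{\rm sat})$ and hence, after dualizing, yield $\indeg(H^0_\im(H_1))\geq d+\indeg(I^{\rm sat})$, which is exactly what is needed. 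The hypotheses $n\geq 3$, $d\geq 2$ enter to guarantee that the relevant cohomological shifts fall in the range where these isomorphisms are valid and nonvacuous (and to exclude degenerate small cases).

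The main obstacle I anticipate is the middle inequality $\mu_0(I)\leq\nu_0(I)$, and within it the bound $\indeg(I^{\rm sat})\leq\indeg(H_1)$: unlike the other steps, which are essentially shift-tracking in known exact sequences, this one requires genuinely extracting a saturation element from a first Koszul syzygy, and the cleanest route is probably via the canonical-module description of $H_1$ for an almost complete intersection (the reduction to $n$ general linear combinations ${\bf g}$ as in the proof of Lemma~\ref{lem:indeg}, where $H_1({\bf g};R)\simeq\omega_{R/({\bf g})}$, then local duality turning $\indeg$ of this canonical module into $\fin$ of $R/({\bf g})$, and finally comparing with $\fin(R/I^{\rm sat})$ using that $I$ and $({\bf g})$ share a saturation). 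I would carry the steps out in the order: (1) rightmost inequality (trivial); (2) middle inequality via the two sub-inequalities just described; (3) leftmost inequality via the Koszul-duality reformulation of $\mu_0$ and the standard expression of $\reg(R/I)$ through $H^0_\im$ and $H^1_\im$.
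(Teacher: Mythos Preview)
Your plan has the correct high-level structure, and the rightmost inequality is indeed trivial. However, there are two genuine issues in the substantive steps.

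\medskip

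\textbf{Middle inequality.} For the sub-inequality $\indeg(I^{\rm sat})\leq\indeg(H_1)$ you are working far too hard: since $Z_1\subset R(-d)^{n+1}$ one has $\indeg(H_1)\geq d$, and since $I\subseteq I^{\rm sat}$ with $I$ generated in degree $d$ one has $\indeg(I^{\rm sat})\leq d$. That is all; no canonical modules are needed here. The paper records exactly this.

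The real gap is in your argument for $\indeg(H^0_\im(H_1))\geq d+\indeg(I^{\rm sat})$. The isomorphism \eqref{eq:isoH2Z2} together with \eqref{eq:HO*-H2Z2} yields
\[
H^0_\im(H_1)\simeq \bigl(H^0_\im(R/I)\bigr)^*[n-(n+1)d]=(I^{\rm sat}/I)^*[n-(n+1)d],
\]
so what they pin down is $\indeg(H^0_\im(H_1))=(n+1)d-n-\fin(I^{\rm sat}/I)$, \emph{not} anything in terms of $\fin(R/I^{\rm sat})$. To turn this into the desired bound you would need $\fin(I^{\rm sat}/I)\leq n(d-1)-\indeg(I^{\rm sat})$, which is a nontrivial regularity-type statement that does not follow from shift-tracking alone (and is essentially of the same depth as the inequality you are trying to prove). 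The paper proceeds quite differently: it passes to $n$ general $k$-linear combinations $g_1,\ldots,g_n$ of the $f_i$, sets $J=(g_1,\ldots,g_n)$, uses that $I^{\rm sat}=J^{\rm sat}$ with $J\subsetneq J^{\rm sat}$, and then exploits the short exact sequence
\[
0\to H_1({\bf g};R)\to H_1\to \bigl(0:_{R/J}f\bigr)[-d]\to 0
\]
coming from the proof of Lemma~\ref{lem:indeg}. Since $H^0_\im(H_1({\bf g};R))=0$, this gives
\[
\indeg(H^0_\im(H_1))\geq d+\indeg\bigl(0:_{H^0_\im(R/J)}f\bigr)\geq d+\indeg(H^0_\im(R/J))=d+\indeg(J^{\rm sat}/J)\geq d+\indeg(I^{\rm sat}).
\]
This reduction to an almost complete intersection is the missing idea in your plan.

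\medskip

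\textbf{Leftmost inequality.} Your sketch via the Koszul-duality reformulation of $\mu_0$ is vague at the crucial point: you assert that $H^1_\im(R/I)$ is ``related to $H^1_\im(H_1)$ up to the appropriate shift'', citing the isomorphism for $H^1_\im(H_2)$, but that isomorphism concerns $H_2$ and $R/I^{\rm sat}$ and does not directly give the needed comparison $\fin(H^1_\im(R/I))+d\leq\fin(H^1_\im(H_1))$. The paper avoids this bookkeeping entirely: it simply invokes Theorem~\ref{thm:Fl/Fl-1free}(i), which says $H^i_\im(\sym_R(I))_\mu=0$ for $i>0$ and $\mu\geq\mu_0$; reading this in $T$-degree $1$ gives $H^i_\im(I)_{\mu+d}=0$, hence $H^{i-1}_\im(R/I)_{\mu+d}=0$, for $\mu\geq\mu_0$ and $i>0$, which is exactly $\reg(R/I)\leq\mu_0(I)+d$.
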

\begin{proof} First, notice that since $0\leq \indeg(I^{\rm sat})\leq d$, it is clear that $0\leq \nu_0(I) \leq (n-1)(d-1)$.
	
	Now, since $\indeg(H_1)\geq d$ and $\indeg(I^{\rm sat})\leq d$, if one proves that
	\begin{equation}\label{eq:mu-inequal1}
		\indeg(H^0_\im(H_1))\geq d + \indeg(I^{\rm sat})		
	\end{equation}
	 then the inequality $\mu_0 (I) \leq \nu_0(I)$ will follow. We may assume that $k$ is an infinite field.
Let $g_1,\ldots ,g_n$ be general $k$-linear combinations of the $f_i$'s and set $J:=(g_1,\ldots ,g_n)$. The ideals $I$ and $J$ have the same
saturation with respect to $\im$ and hence $\indeg(I^{sat})=\indeg(J^{sat})$. Moreover, since $I$ is minimally generated we necessarily
have $J\subsetneq J^{\rm sat}$ (for $I^{\rm sat}=J^{sat}=J\subset I \subset I^{\rm sat}$ implies that $I=J$). As shown in the proof of Lemma \ref{lem:indeg},
one has an exact sequence
			$$
		0\ra\, H_1(g_1,\ldots,g_n;R)\ra \,H_1\ra\, 0:_{R/J}(f_i)[-d]\ra\, 0.
		$$
Therefore, since $H^0_\im(H_1(g_1,\ldots,g_n;R))\simeq H^0_\im(H_2)=0$, e.g., by \eqref{eq:isoHqZp}, it obtains
		\begin{align*}
			\indeg (H^0_\im (H_1)) & \geq \indeg (H^0_\im (0:_{R/J}(f_i))[-d]) \\
			&= d+\indeg (0:_{H^0_\im (R/J)}(f_i))\\
			&\geq d+\indeg (H^0_\im (R/J)) \\
			&= d+\indeg (J^{\rm sat}/J) \\
			& \geq d+\indeg (J^{\rm sat}) = d+\indeg (I^{\rm sat})
		\end{align*}
		
By Theorem \ref{thm:Fl/Fl-1free}, $H^i_\im (\sym_R^1 (I))_\mu =H^i_\im (I)_{\mu +d}\simeq H^{i-1}_\im (R/I)_{\mu +d}=0$
for $\mu \geq \mu_0$ and $i>0$. This proves that $\reg (R/I)\leq \mu_0 (I)+d$, as $H^i_\im (R/I)=0$ for $i>1$.
		\end{proof}

In addition to the above result, it is also possible to provide a lower bound for the threshold degree solely  in terms of $n$ and $d$.
For this purpose, we begin with a technical result.

		\begin{lem}\label{lem:ci<0}
			Let $n\geq 2$ and $d\geq 2$ be two integers and consider the polynomial
			$$\frac{(1-t^d)^n}{(1-t)^{n-1}}=\sum_{i=0}^{n(d-1)+1} c_i t^i \in \mathbb{Z}[t]$$
			Then, we have
			\begin{itemize}
				\item $c_i>0$ for every  $0\leq i \leq \lfloor \frac{n(d-1)}{2} \rfloor$,
				\item $c_i<0$ for every  $\lceil \frac{n(d-1)}{2} +1 \rceil  \leq i \leq n(d-1)+1$,
				\item if $n(d-1)+1$ is even then $c_{\frac{n(d-1)+1}{2}}=0$.
			\end{itemize}
		\end{lem}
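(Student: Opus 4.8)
The plan is to recognize the generating function $\frac{(1-t^d)^n}{(1-t)^{n-1}}$ as a difference $P(t) - t^d Q(t)$ of two quantities that are themselves genuine Hilbert series of Artinian complete-intersection-type algebras, so that one can read off sign information from a Hard Lefschetz / Poincar\'e duality argument exactly as in the Iarrobino--Stanley proposition proved just above. Concretely, set $A := k[\om_1,\ldots,\om_{n-1}]/(\om_1^d,\ldots,\om_{n-1}^d)$, whose Hilbert series is $\frac{(1-t^d)^{n-1}}{(1-t)^{n-1}}$, and observe that
$$
\frac{(1-t^d)^n}{(1-t)^{n-1}} = (1-t^d)\cdot\frac{(1-t^d)^{n-1}}{(1-t)^{n-1}},
$$
so $c_i = \dim_k A_i - \dim_k A_{i-d}$. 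Thus $c_i$ is exactly the ``expected'' Hilbert function increment when one quotients $A$ by a general form of degree $d$, and the sign of $c_i$ is governed by whether $\dim_k A_i$ is bigger or smaller than $\dim_k A_{i-d}$.

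First I would record that $A$ is a standard graded Artinian Gorenstein algebra with socle degree $(n-1)(d-1)$, so its Hilbert function is symmetric: $\dim_k A_j = \dim_k A_{(n-1)(d-1)-j}$. Next, by the Hard Lefschetz property for $A$ (a product of cohomology rings of projective spaces, over a field of characteristic zero, equipped with the K\"ahler class $\om_1+\cdots+\om_{n-1}$), multiplication by a general linear form is of maximal rank between any two graded pieces; equivalently, the Hilbert function of $A$ is unimodal, strictly increasing up to the middle and strictly decreasing after. From unimodality plus symmetry one deduces: $\dim_k A_i > \dim_k A_{i-d}$ precisely when $i$ lies in the "first half" and $\dim_k A_i < \dim_k A_{i-d}$ in the "second half", with equality only at the exact center when it is an integer. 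A short bookkeeping computation then converts the break-point $i$ versus $(n-1)(d-1) - (i-d) = (n-1)(d-1)+d - i = n(d-1)+1 - i$ into the stated thresholds $\lfloor \frac{n(d-1)}{2}\rfloor$ and $\lceil \frac{n(d-1)}{2}+1\rceil$, and forces $c_i = 0$ when $n(d-1)+1$ is even (i.e.\ $i = \frac{n(d-1)+1}{2}$ is the center). The positivity of $c_i$ for all $0 \leq i \leq \lfloor \frac{n(d-1)}{2}\rfloor$, including the small degrees where $i - d < 0$ and the claim is immediate since $\dim_k A_{i-d}=0$ while $\dim_k A_i > 0$, then follows, and symmetrically for the negative range.

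The main obstacle I anticipate is not the Hard Lefschetz input itself—this is the same tool already invoked for the Iarrobino--Stanley proposition and, as the authors note, there is the appendix by Oesterl\'e giving an elementary alternative—but rather making the comparison $\dim_k A_i$ versus $\dim_k A_{i-d}$ completely rigorous at the boundary and handling the parity cases cleanly, so that the floor/ceiling expressions come out exactly right and the "$c_i = 0$ at the center" statement is obtained with the correct hypothesis. An alternative, entirely elementary route that sidesteps Lefschetz would be to argue directly by induction on $n$: write $\frac{(1-t^d)^n}{(1-t)^{n-1}} = (1-t^d)\cdot\frac{(1-t^d)^{n-1}}{(1-t)^{n-1}}$ and use that the coefficients of $\frac{(1-t^d)^{n-1}}{(1-t)^{n-1}}$ (the Hilbert function of $A$) are symmetric and unimodal with known peak location—unimodality of $\prod_{j=1}^{n-1}(1+t+\cdots+t^{d-1})$ being a classical fact provable by a real-rootedness or injection argument—and then check that multiplying by $(1-t^d)$ shifts the sign change to the claimed spot; this is the form of argument I would actually write out, citing the appendix for the unimodality if a self-contained proof is not desired.
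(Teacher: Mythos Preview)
Your argument is correct but takes a longer detour than the paper's. You factor
\[
\frac{(1-t^d)^n}{(1-t)^{n-1}}=(1-t^d)\left(\frac{1-t^d}{1-t}\right)^{n-1},
\]
so that $c_i=\dim_k A_i-\dim_k A_{i-d}$ with $A=k[\omega_1,\dots,\omega_{n-1}]/(\omega_1^d,\dots,\omega_{n-1}^d)$; you then have to compare two Hilbert values a distance $d$ apart, which forces the symmetry trick $\dim_k A_{i-d}=\dim_k A_{n(d-1)+1-i}$ and the distance-to-center bookkeeping you anticipate. The paper instead factors
\[
\frac{(1-t^d)^n}{(1-t)^{n-1}}=(1-t)\left(\frac{1-t^d}{1-t}\right)^{n},
\]
so that $c_i=d_i-d_{i-1}$ is a \emph{consecutive} difference of the Hilbert function $(d_i)$ of the $n$-variable complete intersection. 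Once one knows that $(d_i)$ is symmetric and strictly unimodal with peak at $n(d-1)/2$ (the paper cites \cite[Theorem~1]{RRR91} for exactly this), the three bullet points fall out immediately with no boundary or parity analysis. Both routes rest on the same unimodality fact for $(1+t+\cdots+t^{d-1})^k$; the paper's factorization just makes the dependence transparent and eliminates the case analysis you flagged as the main obstacle. One small correction to your aside: $1+t+\cdots+t^{d-1}$ is not real-rooted for $d\geq 3$, so real-rootedness is not the right tool here; log-concavity or the reference \cite{RRR91} does the job.
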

		\begin{proof} First, observe that
			$$\frac{(1-t^d)^n}{(1-t)^{n-1}}=(1-t)\left(\frac{1-t^d}{1-t}\right)^n$$
		Now, the coefficients of the polynomial	
		$$\left(\frac{1-t^d}{1-t}\right)^n=\sum_{i=0}^{n(d-1)} d_i t^i$$
		rank along
a symmetric sequence that increases up to index $\frac{n(d-1)}{2}$, which corresponds to two indexes
if $n(d-1)$ is odd) and then decreases \cite[Theorem 1]{RRR91}. Multiplying out by $1-t$ leads to the sequence of coefficients $(d_i-d_{i-1})_i$ from which the result follows easily.
		\end{proof}

		\begin{prop}\label{notmprim} Let $\dim (R/I)\leq 1$. If $\nu (I_\ip )\leq \dim R_\ip +1$ for every prime ideal
$\ip \supset I$ then
			%Under the hypotheses of Theorem \ref{thm:Fl/Fl-1free}, we have
			$$\mu_0(I)\geq \left\lfloor{{(n-2)(d-1)-1}\over{2}}\right\rfloor$$
			for $n\geq 3$ and $d\geq 2$.
		\end{prop}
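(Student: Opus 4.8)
The plan is to bound $\mu_0(I)$ from below by controlling the two quantities $\indeg(H_1)$ and $\indeg(H^0_\im(H_1))-d$ that enter its definition, and to reduce everything to the $\im$-primary estimate already available in Corollary~\ref{mu0gen}. First I would reduce to an infinite field and, as in the proof of Lemma~\ref{lem:indeg} and Proposition~\ref{prop:regmu}, pass to $n$ general $k$-linear combinations $g_1,\dots,g_n$ of the $f_i$'s, writing $I=(g_1,\dots,g_n)+(f)$ with $f\in I_d$ and using the exact sequence
$$
0\ra H_1(g_1,\dots,g_n;R)\ra H_1\ra 0:_{R/J}(f)[-d]\ra 0,\qquad J:=(g_1,\dots,g_n),
$$
together with the vanishing $H^0_\im(H_1(g_1,\dots,g_n;R))\simeq H^0_\im(H_2)=0$ used in Proposition~\ref{prop:regmu}. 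This already gives $\indeg(H^0_\im(H_1))\geq d+\indeg(H^0_\im(R/J))$, so the relevant term $\indeg(H^0_\im(H_1))-d$ is controlled by $\indeg(J^{\rm sat}/J)$, an $\im$-primary-type invariant of the artinian reduction.

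Next I would invoke the Hilbert-series input. Choosing the $f_i$ (hence the $g_j$) general, $g_1,\dots,g_n$ is a regular sequence, so $A:=R/J$ is an artinian complete intersection with Hilbert series $\bigl((1-t^d)/(1-t)\bigr)^n$; after cutting by one more general form $f$ of degree $d$ one is looking at the coefficients of
$$
\frac{(1-t^d)^n}{(1-t)^{n-1}}=\sum_i c_i t^i,
$$
exactly the polynomial analyzed in Lemma~\ref{lem:ci<0}. That lemma locates the sign change of the $c_i$ at $\lfloor n(d-1)/2\rfloor$, which pins down $\fin(R/(J+(f)))$ and hence, via Koszul duality and the torsion sequence above, the initial degree $\indeg(H^0_\im(H_1))$ in the general case. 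Feeding this back into
$$
\mu_0(I)=(n-1)(d-1)-\min\{\indeg(H_1),\indeg(H^0_\im(H_1))-d\}
$$
and bounding $\indeg(H_1)$ as well (here $H_1\supseteq H_1(g;R)=\omega_{R/J}$, whose initial degree is again read off the complete-intersection Hilbert series), a routine arithmetic simplification should yield $\mu_0(I)\geq \lfloor((n-2)(d-1)-1)/2\rfloor$. The passage from ``general'' to ``arbitrary admissible'' ideals uses semicontinuity: the Betti numbers / local-cohomology dimensions can only jump up under specialization, so $\indeg(H_1)$ and $\indeg(H^0_\im(H_1))$ can only drop relative to the generic value — wait, one must be careful about the direction, so I would instead argue directly that the lower bound on $\mu_0$ is a closed condition, or bound the generic case and note that for the stated inequality only a lower bound on $\fin(H^0_\im(R/J))$ for \emph{general} $J$ is needed, which is a property of the specific artinian algebra and does not depend on $f$.

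The main obstacle I anticipate is the bookkeeping around genericity and the exact comparison of the two terms in $\min\{\indeg(H_1),\indeg(H^0_\im(H_1))-d\}$: one must verify that the term achieving the minimum is the one whose lower bound is supplied by Lemma~\ref{lem:ci<0}, and that the ``$+1$'' and floor/ceiling shifts from Koszul duality (passing between $\indeg$ of a module and $\fin$ of its Matlis dual, and between $R/I$ and $H_1$) line up to give precisely $\lfloor((n-2)(d-1)-1)/2\rfloor$ rather than an off-by-one weaker bound. The hypotheses $n\geq 3$, $d\geq 2$ enter exactly here, to keep the relevant coefficient ranges in Lemma~\ref{lem:ci<0} nonempty and the parity computation honest.
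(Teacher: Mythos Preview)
Your plan has a structural gap: both of the main moves produce bounds in the wrong direction, and the proposed repair via semicontinuity cannot work in this setting.

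First, the exact sequence you invoke from Proposition~\ref{prop:regmu} yields $\indeg(H^0_\im(H_1))\geq d+\indeg(H^0_\im(R/J))$, i.e.\ a \emph{lower} bound on $\indeg(H^0_\im(H_1))$. But $\mu_0(I)=(n-1)(d-1)-\min\{\indeg(H_1),\indeg(H^0_\im(H_1))-d\}$, so to bound $\mu_0(I)$ from below you need an \emph{upper} bound on the minimum, hence on at least one of the two terms. Second, your ``general $f_i$'' reduction collapses the problem to the $\im$-primary case: if the $f_i$ are general, then $I$ is $\im$-primary, $g_1,\dots,g_n$ is a regular sequence, $H_1(g;R)=0$, and you are simply back in Corollary~\ref{mu0gen} with the stronger bound $\lfloor (n-1)(d-1)/2\rfloor$. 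But that bound genuinely fails for $\dim(R/I)=1$: Example~\ref{HBexpl} exhibits ideals with $\mu_0(I)<\lfloor (n-1)(d-1)/2\rfloor$. So no semicontinuity argument from the generic fiber can push the $\im$-primary bound to all admissible $I$; the weaker bound $\lfloor ((n-2)(d-1)-1)/2\rfloor$ is not an artifact of the method but is actually required. Relatedly, when $\dim(R/I)=1$ the elements $g_1,\dots,g_n$ are \emph{not} a regular sequence (they generate an ideal with the same one-dimensional saturation as $I$), so the ``artinian complete intersection'' picture you sketch is unavailable for the given $I$.

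The paper's argument avoids genericity entirely. It works directly with the Euler-characteristic identity $\sum_i(-1)^i h_i(\mu)=[t^\mu]\,(1-t^d)^{n+1}/(1-t)^n$, which holds for the Koszul homology of the given $I$. After applying the difference operator $\Delta$, this becomes $\Delta h_0-\Delta h_1+\Delta h_2=c_\mu-c_{\mu-d}$ with the $c_i$ of Lemma~\ref{lem:ci<0}. The sign information from that lemma, together with $\Delta h_2\geq 0$ (since $H^0_\im(H_2)=0$), forces a dichotomy at $\mu=\lceil (n(d-1)+1)/2\rceil$: either $\Delta h_0(\mu)\geq 0$, in which case $\Delta h_1(\mu)>0$ so $(H_1)_\mu\neq 0$ and $\indeg(H_1)\leq\mu$; or $\Delta h_0(\mu)<0$, which via Koszul duality translates into $\indeg(H^0_\im(H_1))\leq \lfloor (n(d-1)+1)/2\rfloor+d$. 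In either branch one obtains the required upper bound on the minimum, and the stated lower bound on $\mu_0(I)$ follows. The key idea you are missing is this dichotomy between the two terms, extracted from the Euler characteristic for the actual ideal rather than a generic one.
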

		\begin{proof} We assume that $H_2\neq 0$ for otherwise Corollary \ref{mu0gen} applies.
Recalling that $K_i$ denotes the $i$th Koszul homology module of $I$, we have the
following well-known formula in $\mathbb{Z}[t]$:
			\begin{eqnarray*}
						\frac{(1-t^d)^{n+1}}{(1-t)^n}&=&
						\sum_{\mu=0}^{n(d-1)+d}\left(   \sum_{i\geq 0} (-1)^i \dim_k (K_i)_\mu   \right)t^\mu \\
						&=&\sum_{\mu=0}^{(n+1)(d-1)}\left(   \sum_{i\geq 0} (-1)^i \dim_k (H_i)_\mu   \right)t^\mu			 
			\end{eqnarray*}
But since $H_i=0$ for $i\geq 3$, the above simplifies to
			$$ \sum_{\mu=0}^{n(d-1)+d}\left( h_0(\mu)-h_1(\mu)+h_2(\mu) \right) t^\mu
=\frac{(1-t^d)^{n+1}}{(1-t)^n} \in \mathbb{Z}[t],$$
where we have set $h_i(\mu):=\dim_k (H_i)_\mu$, for $i=0,1,2$.			
			
Next consider the difference operator $\Delta h_i$ acting by $\Delta h_i (\mu):= h_i(\mu)-h_i(\mu-1)$, for $i=0,1,2$.
It follows that
\begin{eqnarray*}  \sum_{\mu=0}^{^{n(d-1)+d+1}}\left( \Delta h_0(\mu)- \Delta h_1(\mu)+ \Delta h_2(\mu) \right) t^\mu &=& (1-t)\frac{(1-t^d)^{n+1}}{(1-t)^n}\\
		&=& (1-t^d)\frac{(1-t^d)^n}{(1-t)^{n-1}}.
\end{eqnarray*}	
Applying Lemma \ref{lem:ci<0}, we find that
$$ 	\Delta h_0(\mu)- \Delta h_1(\mu)+ \Delta h_2(\mu) = c_\mu -c_{\mu-d} $$
is non positive ($<0$) for every
\begin{equation}\label{eq:mu}
	\left\lceil \frac{n(d-1)+1}{2} \right\rceil  \leq \mu \leq  \left\lfloor \frac{n(d-1)+1}{2} \right\rfloor +d.
\end{equation}
Note that $\Delta h_2(\nu) \in \mathbb{N}$ for all $\nu$ since $H^0_\im(H_2)=0$.	Therefore, for every  integer $\mu$ satisfying \eqref{eq:mu} and such that $\Delta h_0(\mu)\geq 0$ we have $\Delta h_1(\mu)>0$ and hence $h_1(\mu) \neq 0$.

The condition $\Delta h_0(\mu)\geq 0$ is fulfilled  when $H^0_\im(H_0)_{\mu-1}=0$, that is to say,
when $\fin(H^0_\im(H_0))\leq \mu-2$ or, still equivalently, when
		$$\indeg(H^0_\im(H_1))\geq (n+1)d-n-\mu+2.$$

		These considerations, applied to the lowest possible value of $\mu$ satisfying \eqref{eq:mu},
so we claim now, imply that
		\begin{equation}\label{min-indeg}
			\min \{\indeg(H_1),\indeg(H^0_\im(H_1))-d\} \leq \left\lceil \frac{n(d-1)+1}{2} \right\rceil,
		\end{equation}
from which the required lower bound follows by the definition of $\mu_0(I)$.
		
To see why (\ref{min-indeg}) holds, note that we proved the inequality $\indeg(H_1)\leq \left\lceil \frac{n(d-1)+1}{2} \right\rceil$
provided
			\begin{align*}
			\indeg(H^0_\im(H_1)) &\geq (n+1)d-n-\left\lceil \frac{n(d-1)+1}{2} \right\rceil+2=	
\left\lfloor \frac{n(d-1)+1}{2} \right\rfloor +d +1.
			\end{align*}
Thus, negating the latter inequality yields
		$$\indeg(H^0_\im(H_1)) \leq \left\lfloor \frac{n(d-1)+1}{2} \right\rfloor +d $$
		and \eqref{min-indeg} follows.
		\end{proof}

		\begin{expl}\label{HBexpl} Let $n=3$. Let $I$ denote the ideal generated by the $3$-minors of a matrix of general forms
		$\oplus_{i=1}^3 R(-e_i)\rightarrow R^4$ where $\sum_{i=1}^3 e_i=d$. As is well-known, $I$ is a codimension $2$ saturated
ideal, hence $H^0_\im(H_1)=0$ and $I$ is not $\im$-primary. By \cite[\S 1]{AH}, the module $H_1$
is generated in degree $d+\min_i\{e_i\}$. We deduce that
		$$ \left\lceil \frac{2d}{3} \right\rceil -2 \leq \mu_0(I)=d-2-\min_i\{e_i\} \leq d-3$$
		Also, we obtain that $\reg (R/I)-d=\min_i\{e_i\}-2$ and $\nu_0(I)=d-2$, which is coherent with Proposition \ref{prop:regmu}.
Notice also that if the lower bound given in Proposition~\ref{notmprim} is satisfied, the one given in Corollary~\ref{mu0gen} is not.
This shows that the assumption that $I$ be $\im$-primary in Corollary~\ref{mu0gen} is not superfluous.
		\end{expl}

%\subsection{Free resolutions}

\section{Application to the hypersurface implicitization problem}

Given a parametrization
\begin{eqnarray*}
\mathbb{P}^{n-1} & \rightarrow & \mathbb{P}^n \\
(X_1:\cdots:X_n) & \mapsto & (f_0:\cdots:f_{n})(X_1:\cdots:X_n) 	
\end{eqnarray*}
of a rational hypersurface $\ch$, the approximation complex of cycles associated to the ideal $I=(f_0,\ldots,f_{n})$ has been used
(see e.g.~\cite{BuJo03,BuCh05}) to derive a matrix-based representation of $\ch$. Such a representation only uses the linear syzygies
of the ideal $I$. The results obtained in the previous sections allow to produce new matrix-based representations of $\ch$ that involve
 not only the linear syzygies but also higher order syzygies of the ideal $I$. Indeed, the following proposition shows  that the divisor
 associated to $(\cs_I^*)_\mu$ has the expected property for every threshold integer $\mu$.

 \begin{prop}\label{prop:divmu0} Let $\dim (R/I)\leq 1$. If $\nu (I_\ip )\leq \dim R_\ip +1$ for every prime ideal $\ip \supset I$
 %Under the hypothesis of Theorem \ref{thm:Fl/Fl-1free},
then for every  $\mu \geq 0$, one has $\ann_{R'}((\cs_I^*)_\mu )=\ann_{R'}((\cs_I^*)_0 )=H^0_\im (\sym_R (I))_0$ while
	for every  threshold integer $\mu$ it obtains
	$$
	\div ((\cs_I^*)_{\mu +1})=\div ((\cs_I^*)_\mu ).
	$$
 \end{prop}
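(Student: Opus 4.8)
The plan is to treat the two assertions separately.

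\emph{Annihilators.} First I would record that, since the defining linear relations of $\sym_R(I)$ all have positive $X$-degree, the $X$-degree-$0$ component of $\sym_R(I)$ is $R'$; hence $(\cs_I^*)_0=R'/J_0$ with $J_0:=J\cap R'=(H^0_\im(\sym_R(I)))_0$, so $\ann_{R'}((\cs_I^*)_0)=J_0$. As $\cs_I^*$ is an $R'/J_0$-algebra, $J_0\subseteq\ann_{R'}((\cs_I^*)_\mu)$ for every $\mu$. Conversely, if $P\in R'$ kills $(\cs_I^*)_\mu$, i.e.\ $P\,R_\mu\subseteq J$, then $P\,\im^\mu S=P\,R_\mu S\subseteq J$; but $H^0_\im(\cs_I^*)=0$ (it is the $\im$-torsion of $\sym_R(I)/H^0_\im(\sym_R(I))$), so $J=(J:_S\im^\infty)$ and therefore $PS\subseteq(J:_S\im^\mu)=J$, i.e.\ $P\in J\cap R'=J_0$. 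This holds for every $\mu\geq0$, as claimed.

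\emph{Divisors.} I would use the description $\div(N)=\sum_{\mathrm{ht}\,\ip=1}\mathrm{length}_{R'_\ip}(N_\ip)\,[\ip]$ for a finitely generated torsion $R'$-module $N$. By the first part each $(\cs_I^*)_\mu$ is torsion with support $V(J_0)$, and $J_0$ has height one since $\ch$ is a hypersurface, so only the finitely many primes $\ip$ minimal over $J_0$ contribute; it therefore suffices to fix such an $\ip$ and show $\mathrm{length}_{R'_\ip}((\cs_I^*)_\mu\otimes_{R'}R'_\ip)$ is independent of $\mu$ for $\mu\geq\mu_0(I)$. From Theorem~\ref{thm:Fl/Fl-1free}(i) and the exact sequence $0\to H^0_\im(\sym_R(I))\to\sym_R(I)\to\cs_I^*\to0$ one gets $H^i_\im(\cs_I^*)_\mu=0$ for all $i\geq0$ and all $\mu\geq\mu_0(I)$. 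Set $\bar A:=R'_\ip/J_0R'_\ip$ (Artinian local, since $\ip$ is minimal over $J_0$) and $M:=\cs_I^*\otimes_{R'/J_0}\bar A$, which is a graded cyclic module over $R\otimes_k\bar A$. By flat base change $H^i_\im(M)_\mu=0$ for all $i$ and $\mu\geq\mu_0$, so the standard comparison of local and sheaf cohomology on $\mathbb P^{n-1}_{\bar A}=\proj_R(R\otimes_k\bar A)$ yields $M_\mu\xrightarrow{\ \sim\ }H^0(\mathbb P^{n-1}_{\bar A},\widetilde M(\mu))$ for $\mu\geq\mu_0$, where $\widetilde M$ is the coherent sheaf attached to $M$.

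Now $\widetilde M$ is supported on the fibre over $\ip$ of $\proj_R(\cs_I^*)\to\spec(R'/J_0)$: since $\dim\proj_R(\cs_I^*)\leq\dim\cs_I^*-1=n=\dim(R'/J_0)$ — using the standing hypothesis $\dim\sym_R(I)=\dim\rees_R(I)$ and that $\proj$ drops Krull dimension by one — and $\ip$ is the generic point of an $n$-dimensional component of $\spec(R'/J_0)$, that fibre $Z$ is zero-dimensional. Being zero-dimensional and proper over the Artinian local ring $\bar A$, $Z$ is affine and has trivial Picard group (a finite product of Artinian local rings), so $\mathcal O(\mu)$ restricts trivially to it and $H^0(\mathbb P^{n-1}_{\bar A},\widetilde M(\mu))\cong H^0(\mathbb P^{n-1}_{\bar A},\widetilde M)$ as $\bar A$-modules, independently of $\mu$; its $\bar A$-length equals its $R'_\ip$-length because $\widetilde M$ is an $\bar A$-module. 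Combining with $M_\mu\cong H^0(\widetilde M(\mu))$ for $\mu\geq\mu_0$ gives the stability of $\mathrm{length}_{R'_\ip}((\cs_I^*)_\mu\otimes R'_\ip)$, and summing over the finitely many $\ip$ yields $\div((\cs_I^*)_{\mu+1})=\div((\cs_I^*)_\mu)$.

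The argument has no deep step: its substance is the two ``vanishings'' $\ann_{R'}((\cs_I^*)_\mu)=J_0$ and $H^i_\im(\cs_I^*)_{\geq\mu_0}=0$, after which the rest is bookkeeping. I expect the main obstacle to be that bookkeeping around $\widetilde M$ — identifying it with (a thickening of) a fibre of $\proj_R(\cs_I^*)$ over a height-one prime of $R'$, verifying that this fibre is zero-dimensional (equivalently, that the ``extra'' components of $\proj_R(\cs_I^*)$ coming from the torsion $\ker(\cs_I^*\to\rees_R(I))$ cannot dominate a codimension-one component of $V(J_0)$), and checking throughout that lengths and cohomology are consistently computed over $\bar A$, or equivalently over $R'_\ip$; an alternative is to phrase the entire stability statement directly in terms of the Hilbert function of the finite $\bar A$-module $H^0(\widetilde M)$, avoiding any mention of Picard groups.
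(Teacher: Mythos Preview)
Your proof is correct. The annihilator part matches the paper's in substance (both rest on $\cs_I^*$ having no $\im$-torsion; the paper expresses this by choosing $\ell\in R_1$ off the minimal primes of $I$ and using it as a nonzerodivisor, you via $J=(J:_S\im^\infty)$).

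For the divisor stability the two routes genuinely differ. The paper argues step by step via a hyperplane section: with $\ell$ as above one has
\[
0\lra(\cs_I^*)_\mu\ \xrightarrow{\ \times\ell\ }\ (\cs_I^*)_{\mu+1}\lra(\cs_I^*/\ell\,\cs_I^*)_{\mu+1}\lra 0,
\]
so it suffices that the cokernel have trivial divisor. Since a general hyperplane misses the (finite) base locus, $\proj(\cs_I^*/\ell\,\cs_I^*)$ coincides with $\proj(\rees_{\ol R}(\ol I))$ for $\ol R=R/(\ell)$, and the $R'$-annihilator of $\rees_{\ol R}(\ol I)_\nu$ is a height-two prime independent of $\nu$; the only obstruction is possible $\ol\im$-torsion in $(\cs_I^*/\ell\,\cs_I^*)_{\mu+1}$, and that is controlled by $H^1_\im(\cs_I^*)_\mu=0$. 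Thus the paper uses only the $i=1$ case of Theorem~\ref{thm:Fl/Fl-1free}(i) and no sheaf-theoretic input. Your approach instead localizes at height-one primes of $R'$, invokes the full vanishing $H^i_\im(\cs_I^*)_{\geq\mu_0}=0$ to identify the localized graded pieces with global sections of a sheaf on $\mathbb{P}^{n-1}$ over an Artinian base, and then uses Picard-triviality of its zero-dimensional support. This is more geometric and yields constancy of the localized lengths in one stroke rather than inductively; the paper's argument is shorter, more elementary, and needs a weaker cohomological input. One small point worth tightening: ``$J_0$ has height one since $\ch$ is a hypersurface'' only gives $J_0\subseteq(H)$; to get $J_0\neq 0$ observe that $\cs_I^*/\im\,\cs_I^*\cong R'/J_0$ while $\im$ lies in no minimal prime of the $(n{+}1)$-dimensional ring $\cs_I^*$ (because $H^0_\im(\cs_I^*)=0$), forcing $\dim R'/J_0\leq n$.
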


\begin{proof}
	Let $\ell\in R_1$ not in any minimal prime of $I$. Then $\ell$ is a nonzero divisor on $\cs_I^*$. Hence for
	$\mu\geq 0$ the
	canonical inclusion $R_1\ann_{R'}((\cs_I^*)_\mu )\subseteq \ann_{R'}((\cs_I^*)_{\mu +1})$ is an equality. Furthermore,
	the exact sequence
	$$
	0\ra (\cs_I^*)_{\mu }\buildrel{\times \ell}\over{\lra} (\cs_I^*)_{\mu  +1}\ra (\cs_I^*/(\ell )\cs_I^*)_{\mu  +1}\ra 0
	$$
	shows that
	$$
	\div ((\cs_I^*)_{\mu +1})=\div ((\cs_I^*)_\mu )+\div ((\cs_I^*/(\ell )\cs_I^*)_{\mu  +1}).
	$$
	Let $\ol{R}:=R/(\ell )$,  $\ol{I}:=I/(\ell )\subset \ol{R}$, $H:=\proj (\ol{R})\subset \proj (R)={\bf P}^{n-1}$, notice that $\proj (\cs_I^*/(\ell )\cs_I^*)\subset H\times {\bf A}^{n+1}$
	coincides with $\proj (\sym_{\ol{R}}(\ol{I}))=\proj (\rees_{\ol{R}}(\ol{I}))$ and that $\ann_{R'}(\rees_{\ol{R}}(\ol{I}))_\mu )$
	is a prime ideal of  height two that does not depend on $\mu$ for any $\mu \geq 0$.

	It follows that
	$$\div ((\cs_I^*/(\ell )\cs_I^*)_{\mu  +1})=\div (\rees_{\ol{R}}(\ol{I})_{\mu +1})=0$$ if $H^0_\im ((\cs_I^*/(\ell )\cs_I^*))_{\mu  +1}=0$, which in turns
	hold if $H^1_\im (\cs_I^*)_{\mu }=0$. The conclusion then follows from Theorem 1.
\end{proof}

As a consequence of Proposition \ref{prop:divmu0} and Corollary \ref{cor:genres}, the matrix of the first map of the resolution of $\cs_I^*$,
in any basis with respect to the chosen degree, provides a matrix-based representation of the hypersurface $\ch$ if the base points, if any, are
locally complete intersection. Otherwise, if the base points are almost complete intersections, then some known extraneous factors appear; we
refer the interested reader to \cite{BCJ} for more details. We end this paper by summing up the consequence of the results presented in this paper
for the purpose of matrix-based representation of parameterized hypersurfaces.

\subsection{The $\im$-primary case.} This case is particularly comfortable because all the non-linear syzygies that appear in these matrix
representations can be computed by downgrading some linear syzygies of higher degree. This is a consequence of the isomorphisms given in Section \ref{downgrading}.

Recall that, as we explained just after Proposition \ref{prop2}, it is possible to tune the integer $\nu$ so that there is only linear and quadratic
syzygies in the matrix-based representation. Such a framework has been intensively studied by the community of Computer Aided Geometric Design
under the name ``moving surfaces method'' (see \cite {CGZ00,BCD03} and the references therein).

In the particular case $n=3$, we see that only linear and quadratic syzygies appear in the family of matrices $M_\mu$ with $\mu\geq \mu_0$.
If the $f_i$'s are in generic position, then $\mu_0=d-1$ and the matrix $M_{\mu_0}$ is a square matrix (all the $b_i$'s are equal to zero).
In the paper \cite{CGZ00}, a condition on the rank of the moving planes matrix is used. It is interesting to notice that it implies that $\mu_0=d-1$
and hence that the matrix $M_{\mu_0}$ is square. Indeed, with the notation of this paper, the condition in \cite{CGZ00} is $\dim (Z_1)_{2d-1}=d$. From the exact sequence
$$0 \rightarrow Z_1 \rightarrow R(-d)^4 \rightarrow R \rightarrow H_0 \rightarrow 0$$
we get $\dim (Z_1)_{2d-1}=d+\dim (H_0)_{2d-1}$. Therefore, the condition in \cite{CGZ00} implies that $\dim (H_0)_{2d-1}=0$.
Moreover, since $(B_1)_{2d-1}=0$ we have $\dim (H_1)_{2d-1}=d$ and the isomorphism $H_1\simeq {H_0}^*[3-4d]$ shows that
$\dim (H_0)_{2d-2}=d\neq 0$. Therefore, $\fin(H_0)=2d-2$ so that $\mu_0=d-1$ (as if the $f_i$'s were in generic position).

\subsection{In the presence of base points.} In this case, the downgrading maps are no longer available. So that the higher order syzygies
have to be computed as linear syzygies of a suitable power of the ideal $I$.

Notice that similarly to the $\im$-primary case, it is also possible to tune the integer $\nu$ in order to bound the order of the syzygies appearing
in the matrix representation. Mention also that if the ideal $I$ is saturated, so that $H^0_\im(H_1)=0$, it is remarkable that one never gets quadratic
syzygies in the first map of the complex. This is a direct consequence of Theorem \ref{thm:Fl/Fl-1free}.

\begin{expl} Take again Example \ref{HBexpl} and assume that $d=3$, that is to say that we start with a matrix of general linear forms ($e_i=1$ for
every  $i=1,2,3$). In this case, $\mu_0(I)=0$ and $\nu_0(I)=1$. The implicit equation, which is a cubic form, is then directly obtained in the case by
taking $\mu=0$ and is represented by a matrix of linear syzygies when $\mu\geq 1$. According to our previous observation, whatever $\mu\geq 0$ is,
there is no quadratic syzygies involved in the associated complex.
\end{expl}

\begin{expl}
We treat in detail the following example taken from \cite[Example 3.2]{BCD03}. All the computations have been done with the software Macaulay2 \cite{M2}.
$$f_0=X_0X_2^2, \ f_1=X_1^2(X_0+X_2), \ f_2=X_0X_1(X_0+X_2), \ f_3=X_1X_2(X_0+X_2)$$
and $d=3$. The ideal $I=(f_0,f_1,f_2,f_3)$ defines 6 base points: $(0:0:1)$, $(1:0:0)$ with multiplicity 2 and $(0:1:0)$ with multiplicity 3. Its saturation
$I^{\textrm sat}$ is the complete intersection $(X_0X_1+X_1X_2,X_0X_2^2)$, so that $\indeg(I^{\textrm sat})=2$.
The method developed in
\cite{BuCh05} shows that for every  $\mu\geq 2\times (3-1)-2=2$ one can obtain a matrix, filled exclusively with
linear syzygies, representing our
parameterized surface. For instance, such a matrix for $\mu=2$ is given by
$$\begin{pmatrix}{T_1}&
      0&
      0&
      0&
      0&
      {T_3}&
      0&
      0&
      0\\
      {-{T_2}}&
      {T_1}&
      0&
      0&
      0&
      0&
      {T_3}&
      0&
      {T_0}\\
      0&
      0&
      {T_1}&
      0&
      0&
      {-{T_2}}&
      0&
      {T_3}&
      0\\
      0&
      {-{T_2}}&
      0&
      {T_3}&
      0&
      0&
      0&
      0&
      0\\
      0&
      0&
      {-{T_2}}&
      {-{T_1}}&
      {T_3}&
      0&
      {-{T_2}}&
      0&
      {T_0}\\
      0&
      0&
      0&
      0&
      {-{T_1}}&
      0&
      0&
      {-{T_2}}&
      {-{T_2}}\\
      \end{pmatrix}$$
Although this is something that one does not want to do from a computational point of view, one can extract
from the above matrix the implicit
equation of our surface which is $T_1T_2T_3 + T_1T_2T_4 - T_3T_4^2=0$.

Now, we have $\mu_0(I)=2\times 2 - 4=0$ for $\indeg(H_1)=\indeg(H^0_\im(H_1))-d=4$. Moreover, since $H^0_\im(H_1)$
is concentrated in degree 7,
Theorem \ref{thm:Fl/Fl-1free} shows that in the case $\mu=0$ the matrix representing our surface is simply a
$1\times 1$-matrix whose entry is an implicit
 equation of the surface. However, the case $\mu=1$ is much more interesting because in this case the matrix
 representing the surface is filled with
 $\dim (Z_1)_{1+d}=3$ linear syzygies and $\dim H^0_\im(H_1)_{1+3d}=1$ quadratic syzygies since $\dim (R/I^{sat})_{-1}=0$.
 Here is this matrix
$$\begin{pmatrix}
T_2 & 0 & T_4 & -T_4^2 \\
-T_3 & T_4 & 0 & T_1T_3+T_1T_4 \\
0 & -T_2 & -T_4 & 0
\end{pmatrix}$$
It gives a representation of our parameterized surface. Notice that, as observed in \cite[Example 3.2]{BCD03},
 there does not exists a \emph{square} matrix of linear and/or quadratic syzygies whose determinant is an
 implicit equation of the surface.
\end{expl}

\appendix

\section{An argument of Joseph Oesterl\'e}

\medskip

\subsection{Un th\'eor\`eme \`a la Lefschetz}\label{Asec1}
Soient $n$ et $m$ deux entiers naturels. Consid\'erons
 l'anneau gradu\'e $${\rm R}={\bf Q}[x_1,\ldots,x_n]/(x_1^{m},\ldots,x_n^{m}),$$ o\`u les $x_i$ sont des
ind\'etermin\'ees. L'ensemble ${\rm R}_k$ de ses \'el\'ements homog\`enes de degr\'e $k$ est un
espace vectoriel de dimension finie sur ${\bf Q}$ pour tout  $k\in{\bf Z}$; il est non nul si et seulement si $0\le k\le d$, o\`u $d=n(m-1)$.

L'espace vectoriel ${\rm R}_{d}$ est de dimension $1$ sur ${\bf Q}$,  et $(x_1\ldots x_n)^{m-1}$ en est une base. Pour tout entier $k$ tel que $0\le k\le d$, l'application bilin\'eaire ${\rm R}_k\times {\rm R}_{d-k}\to{\rm R}_{d}$ induite par la multiplication de ${\rm R}$ est inversible~;  le rang de ${\rm R}_k$ est donc \'egal \`a celui de   ${\rm R}_{d-k}$.

Posons $\omega=x_1+\ldots+x_n$. Nous allons d\'emontrer le r\'esultat suivant:\medskip

\begin{thmf}
 Soient $k\in{\bf Z}$ et $t\in{\bf N}$. L'application ${\bf Q}$-lin\'eaire de ${\rm R}_k$ dans ${\rm R}_{k+t}$ induite par la multiplication par $\omega^t$ est injective si $2k+t\le d$, et
 surjective si $2k+t\ge d$.
\end{thmf}

Il nous suffira de d\'emontrer la premi\`ere assertion, car la seconde s'en d\'eduit par dualit\'e.
Nous  le ferons par r\'ecurrence sur $n$, en nous servant du lemme suivant, que nous
d\'emontrerons au ${\rm n}^{\circ}$\ref{Asec2}:

\begin{lemf}
	Soient ${\rm A}$ une ${\bf Q}$-alg\`ebre, $a$ un \'el\'ement de ${\rm A}$, $m$ et $t$ des entiers naturels,
	et $x$ une ind\'etermin\'ee. Pour qu'un \'el\'ement de ${\rm A}[x]/x^m{\rm A}[x]$ soit annul\'e par $(x+a)^t$, il faut et
	il suffit que ce soit la classe d'un polyn\^ome ${\rm P}(x)\in{\rm A}[x]$ de la forme
	$\sum_{j=1}^{\inf(m,t)} b_j{\rm P}_j(x)$,
	o\`u ${\rm P}_j(x)=\sum_{i=0}^{m-j}{(m+t-2j-i)!(j+i-1)!\over
	(m-j-i)!(j-1)!}(-a)^ix^{m-j-i}$ et o\`u  $b_j\in{\rm A}$ est annul\'e par $a^{m+t+1-2j}$.
\end{lemf}

\begin{remf}
	Comme  ${\rm P}_j(x)$ est de degr\'e $m-j$ et que son coefficient dominant est
	inversible, les $b_j$ dont il est question dans le lemme sont uniques.	
\end{remf}

Le th\'eor\`eme \'etant clair pour $n=0$, nous supposerons $n\ge1$. Nous appliquerons le lemme en
prenant pour ${\rm A}$ l'anneau ${\bf Q}[x_1,\ldots,x_{n-1}]/(x_1^{m},\ldots,x_{n-1}^{m})$, $a=x_1+\ldots+x_{n-1}$ et $x=x_n$, de sorte que ${\rm A}[x]/x^m{\rm A}[x]$ s'identifie  \`a ${\rm R}$ et que
$\omega=x+a$.

Soit $k$ un entier relatif tel que $2k+t\le d= n(m-1)$. Si un \'el\'ement de ${\rm R}_k$ est annul\'e par $\omega^t$, il est la classe d'un polyn\^ome ${\rm P}$ de la forme $\sum_{j=1}^{\inf(m,t)} b_j{\rm P}_j$, o\`u les ${\rm P}_j$ sont comme
dans le lemme et o\`u $b_j\in{\rm A}$ est annul\'e par $a^{m+t+1-2j}$.
Lorsqu'on munit ${\rm A}[x]$ de la graduation d\'eduite de celle de ${\bf Q}[x_1,\ldots,x_n]$,
${\rm P}_j$ est homog\`ene de degr\'e $m-j$. Vu l'assertion d'unicit\'e de la remarque ci-dessus,
les $b_j$ sont homog\`enes de degr\'e $k-m+j$. Comme
$$2(k-m+j)+(m+t+1-2j)=2k+t-m+1\le d-(m-1)=(n-1)(m-1),$$
l'hypoth\`ese de r\'ecurrence implique que les $b_j$ sont tous nuls et donc que ${\rm P}=0$. Cela d\'emontre le th\'eor\`eme.\bigskip

\subsection{D\'emonstration du lemme}\label{Asec2}

Nous adoptons les notations du lemme :  ${\rm A}$ est une ${\bf Q}$-alg\`ebre, $a$ est un \'el\'ement de ${\rm A}$, $m$ et $t$ sont des entiers naturels et $x$ est une ind\'etermin\'ee.

Notons ${\rm B}$ l'anneau ${\rm A}((x^{-1}))$ des s\'eries de Laurent en $x^{-1}$. Remarquons que
$x+a=x(1+ax^{-1})$ est un \'el\'ement inversible de ${\rm B}$, dont l'inverse est $\sum_{i=0}^\infty
(-a)^ix^{-i-1}$. Consid\'erons le sous-${\rm A}$-module
$${\rm E}={\rm A}x^{m}+{\rm A}x^{m+1}+\ldots+{\rm A}x^{m+t-1}$$ de ${\rm B}$; il  est libre de rang $t$. Notons
${\rm F}$ l'ensemble des  $f\in{\rm B}$ tels que $(x+a)^tf\in{\rm E}$. C'est un sous-${\rm A}$-module de ${\rm B}$ libre de rang $t$, puisque l'application $f\mapsto (x+a)^tf$ d\'efinit un isomorphisme de ${\rm F}$ sur ${\rm E}$. Consid\'erons les \'el\'ements $f_1,f_2,\ldots,f_{t}$ de ${\rm B}$ d\'efinis par :
$$f_j=\begin{cases}{({d\ \over dx})^{t-j}({x^{m+t-j}\over (x+a)^j})}&{\rm si} \ 1\le j\le \inf(m,t)\cr
{x^m\over (x+a)^{j}}&{\rm si} \ m+1\le j\le t.\cr\end{cases}$$

Il est clair que $(x+a)^tf_j$ est un polyn\^ome en $x$, que ce polyn\^ome appartient \`a $x^m{\rm A}[x]$,
que son degr\'e est $m+t-j$ et que son coefficient dominant est un entier naturel non nul
(\`a savoir ${(m+t-2j)!\over(m-j)!}$ si $1\le j\le \inf(m,t)$ et $1$ si  $m+1\le j\le t$). Il s'en suit que
les $(x+a)^tf_j$ forment une base du ${\rm A}$-module  ${\rm E}$ et que les $f_j$ forment une base du ${\rm A}$-module  ${\rm F}$.

Pour qu'un \'el\'ement de ${\rm A}[x]/x^m{\rm A}[x]$ soit annul\'e par $(x+a)^t$, il faut et
il suffit que le polyn\^ome ${\rm P}(x)\in{\rm A}[x]$ de degr\'e $\le m-1$ qui le repr\'esente appartienne \`a
${\rm F}$. Examinons donc \`a quelle condition une s\'erie de Laurent de la forme
$b_1f_1+\ldots+b_tf_t$, o\`u les $b_j$ appartiennent \`a ${\rm A}$, est un polyn\^ome.

Si $1\le j\le \inf(m,t)$,   la s\'erie de Laurent $f_j$ s'\'ecrit ${\rm P}_j+a^{m+t-2j+1}g_j$, o\`u
${\rm P}_j\in{\rm A}[x]$ est le polyn\^ome en $x$ figurant dans le lemme et $g_j$ est une s\'erie formelle
en $x^{-1}$ sans terme constant dont le terme de plus bas degr\'e en $x^{-1}$ est le produit
d'un entier relatif non nul par $x^{-(t-j+1)}$.
Si $m+1\le j\le t$, on pose $g_j=f_j$: dans ce cas
 $g_j$ est une s\'erie formelle en $x^{-1}$ sans terme constant
dont le terme de plus bas degr\'e est  $x^{-(j-m)}$. De ces propri\'et\'es, on d\'eduit que les s\'eries formelles
$g_1,\ldots,g_t$ sont lin\'eairement ind\'ependantes sur~${\rm A}$. Il s'en suit que
$b_1f_1+\ldots+b_tf_t$ est un polyn\^ome si et seulement si on a $b_ja^{m+t-2j+1}=0$
pour $1\le j\le \inf(m,t)$ et $b_j=0$ pour $m+1\le j\le t$. Et ce polyn\^ome est alors
\'egal \`a $\sum_{1\le j\le \inf(m,t)}b_j{\rm P}_j$, d'o\`u le lemme.

% \bibliographystyle{plain}
% \bibliography{ReesAlg}

\end{document}